\documentclass{article}
\usepackage{euscript,amsmath,amssymb,amsfonts,graphicx,bm,amsthm,mathtools}
%\usepackage{pgfplots}
%\pgfplotsset{width=6.5cm,compat=1.3}
%\usepackage{subcaption}
%  \usepackage{paralist}
%  \usepackage{epstopdf}
%  \usepackage{graphics} 
% \usepackage[colorlinks=true]{hyperref}
  %\usepackage{subfigure}
%\numberwithin{equation}{section}

%\usepackage{graphicx,bm}
  \usepackage{paralist}
  \usepackage{graphics} 
\usepackage[caption=false]{subfig}
\usepackage{soul}

\usepackage{latexsym,amssymb,enumerate,amsmath,amsthm} 
  \usepackage{graphicx} 
  \usepackage{tikz}
     \usepackage[colorlinks=false]{hyperref}
 \hypersetup{urlcolor=blue, citecolor=red}
\usepackage{latexsym,amssymb,enumerate,amsmath} 
\usepackage{pgfplots}
\usepackage{soul,xcolor}

%%%%%%%%%%%%%%%%%%%%%%%%%%%%%%%%
\newcommand{\dd}{\textup{d}}
\def\eps{\varepsilon}
\def\E{\mathbb{E}}
\def\P{\mathbb{P}}
\def\R{\mathbb{R}}

\def\PP{\mathcal{P}}
\def\dmin{d_{\textup{min}}}
\def\amin{t_{\textup{min}}}
\def\tmin{t_{\textup{min}}}

\def\t{\textup{target}}
\def\diff{\textup{diff}}
\def\ctmc{\textup{ctmc}}
\def\sub{\textup{sub}}
\def\ctrw{\textup{ctrw}}

%\newcommand{\emp}{\textup{empty}}
%\newcommand{\no}{\textup{n.o.}}
%\newcommand{\no}{\textup{empty}}
%$\tau_{\phys}$ $\tau_{\empty}$

\def\D{\prescript{}{0}D_{t}^{1-\alpha}}

\def\L{\mathcal{L}}

\def\dist{\textup{d}}

\newtheorem{theorem}{Theorem}
\newtheorem{lemma}[theorem]{Lemma}
\newtheorem{proposition}[theorem]{Proposition}
\newtheorem{corollary}[theorem]{Corollary}
\theoremstyle{plain}
\theoremstyle{remark}

\theoremstyle{definition}
\newtheorem{definition}[theorem]{Definition}

\begin{document}

% Use the \preprint command to place your local institutional report
% number in the upper righthand corner of the title page in preprint mode.
% Multiple \preprint commands are allowed.
% Use the 'preprintnumbers' class option to override journal defaults
% to display numbers if necessary
%\preprint{}

%Title of paper
\title{Extreme first passage times for random walks on networks}

% repeat the \author .. \affiliation  etc. as needed
% \email, \thanks, \homepage, \altaffiliation all apply to the current
% author. Explanatory text should go in the []'s, actual e-mail
% address or url should go in the {}'s for \email and \homepage.
% Please use the appropriate macro foreach each type of information

% \affiliation command applies to all authors since the last
% \affiliation command. The \affiliation command should follow the
% other information
% \affiliation can be followed by \email, \homepage, \thanks as well.
\author{Sean D. Lawley\thanks{Department of Mathematics, University of Utah, Salt Lake City, UT 84112 USA (\texttt{lawley@math.utah.edu}).}
}
\date{\today}
\maketitle

\begin{abstract}
Many biological, social, and communication systems can be modeled by ``searchers'' moving through a complex network. For example, intracellular cargo is transported on tubular networks, news and rumors spread through online social networks, and the rapid global spread of infectious diseases occurs through passengers traveling on the airport network. To understand the timescale of search/transport/spread, one commonly studies the first passage time (FPT) of a single searcher (or ``transporter'' or ``spreader'') to a target. However, in many scenarios the relevant timescale is not the FPT of a single searcher to a target, but rather the FPT of the fastest searcher to a target out of many searchers. For example, many processes in cell biology are triggered by the first molecule to find a target out of many, and the time it takes an infectious disease to reach a particular city depends on the first infected traveler to arrive out of potentially many infected travelers. Such fastest FPTs are called extreme FPTs. In this paper, we study extreme FPTs for a general class of continuous-time random walks on networks (which includes continuous-time Markov chains). In the limit of many searchers, we find explicit formulas for the probability distribution and all the moments of the $k$th fastest FPT for any fixed $k\ge1$. These rigorous formulas depend only on network parameters along a certain geodesic path(s) from the starting location to the target since the fastest searchers take a direct route to the target. Hence, the extreme FPTs are independent of the details of the network outside this geodesic(s) and can be drastically faster and less variable than conventional FPTs of single searchers. Furthermore, our results allow one to estimate if a particular system is in a regime characterized by fast extreme FPTs. We also prove similar results for mortal searchers on a network that are conditioned to find the target before a fast inactivation time. We illustrate our results with numerical simulations and uncover potential pitfalls of modeling diffusive or subdiffusive processes involving extreme statistics.
\end{abstract}

%%%%%%%%%%%%%%%%%%%%%%%%%%%%%%%%%%%%%%%%%%%%%%%%%%%%%%%%%%%%%%%%%%%%%%%%%%%%%%%%%%%%%%%%%%%%%%%%%%%%%%%%%%%%%%%%%%%%%%%%%%%%%%%%%%%%%%%%%%%%%%%%%%%%%%%%%%%%%%%%%%%%%%%%%%%%%%%%%%%%%%%%%%%%%%%%%%%%%%%%%%%%%%%%%%%%%%%%%%%%%%%%%%%%%%%%%%%%%%%%%%%%%%%%%%%%
%%%%%%%%%%%%%%%%%%%%%%%%%%%%%%%%%%%%%%%%%%%%%%%%%%%%%%%%%%%%%%%%%%%%%%%%%%%%%%%%%%%%%%%%%%%%%%%%%%%%%%%%%%%%%%%%%%%%%%%%%%%%%%%%%%%%%%%%%%%%%%%%%%%%%%%%%%%%%%%%%%%%%%%%%%%%%%%%%%%%%%%%%%%%%%%%%%%%%%%%%%%%%%%%%%%%%%%%%%%%%%%%
\section{\label{intro}Introduction}

Networks are used to model many biological, social, and communication systems \cite{albert2002, newman2003, pastor2015}. A network (or graph) consists of (i) nodes (or vertices) which can represent cells, individuals, cities, computers, etc., and (ii) edges between nodes which represent their interactions. An important area of network science studies \emph{random walks on networks}, which are stochastic processes involving ``searchers'' (or ``transporters'' or ``spreaders'') who explore a network by randomly moving between connected nodes \cite{noh2004, masuda2017}. Random walks on networks have been used to model very diverse dynamical systems, including the spread of infectious diseases \cite{iannelli2017}, intracellular transport \cite{dora2020}, animal foraging, the spread of opinions and rumors, and node ranking (i.e.\, PageRank) \cite{masuda2017}.

First passage times (FPTs) are commonly used to understand the timescale of search/transport/spread in such models \cite{noh2004, masuda2017, redner2001}. A FPT is defined as the first time a searcher reaches a given target(s). Mathematically, if $X(t)$ denotes the position of a searcher in a discrete set of nodes $I$ at time $t\ge0$, then the FPT to some set of target nodes $I_{\t}\subset I$ is defined by
\begin{align}\label{tau0}
\tau
:=\inf\{t>0:X(t)\in I_{\t}\}.
\end{align}
Important advances have been made to compute the statistics of such a FPT, and in particular to understand how the FPT depends on the structure of the network \cite{noh2004, condamin2007b, reuveni2010, masuda2017}.

However, in many applications the important timescale is not the FPT of a given single searcher to a target. Instead, the relevant timescale is the FPT of the fastest searcher to find a target out of many searchers. For example, in the context of epidemics spreading globally between cities, the first time the disease reaches a particular locale depends on the first infected traveler to arrive out of potentially many infected travelers. As another example, it was recently pointed out that the transport efficiency of the endoplasmic reticulum network depends on the time it takes the fastest molecule to travel between nodes out of many molecules \cite{dora2020}. In fact, many events in cell biology are triggered by the fastest searcher out of many searchers (see the review \cite{schuss2019} and subsequent commentaries \cite{coombs2019, redner2019, sokolov2019, rusakov2019, martyushev2019, tamm2019, basnayake2019c}).

Such fastest FPTs are an example of extreme statistics \cite{colesbook, falkbook, haanbook} and are thus called extreme FPTs \cite{lawley2020uni}. Mathematically, an extreme FPT is defined as
\begin{align*}
T_{N}
:=\min\{\tau_{1},\dots,\tau_{N}\},
\end{align*}
where $\tau_{1},\dots,\tau_{N}$ are $N$ FPTs as in \eqref{tau0}. In particular, $\tau_{1},\dots,\tau_{N}$ represent the FPTs of $N$ searchers who are searching simultaneously for a target, and thus $T_{N}$ is the FPT of the fastest searcher. Typically, the $N$ searchers are assumed to be homogeneous and noninteracting, in which case $\tau_{1},\dots,\tau_{N}$ are independent and identically distributed (iid). More generally, some systems depend on the $k$th fastest FPT for $k\in\{1,\dots,N\}$,
\begin{align*}
T_{k,N}
:=\min\big\{\{\tau_{1},\dots,\tau_{N}\}\backslash\cup_{j=1}^{k-1}\{T_{j,N}\}\big\},%\quad k\in\{1,\dots,N\},
\end{align*}
where $T_{1,N}:=T_{N}$. For searchers which move by diffusion through a continuous state space, many works have studied extreme FPTs \cite{weiss1983, yuste2000, yuste2001, redner2014, meerson2015, ro2017, godec2016x, hartich2018, hartich2019, basnayake2019, lawley2020esp, lawley2020uni, lawley2020dist, lawley2020sub}. However, much less is known about extreme FPTs for processes on discrete state spaces. 

In this paper, we study extreme FPTs for $N\gg1$ searchers on a network. We find explicit and rigorous approximations of the full probability distribution and all the moments of $T_{N}$ for large $N$. We obtain these results by proving that a certain rescaling of $T_{N}$ converges in distribution to a Weibull random variable. In addition, we generalize these results to the $k$th fastest FPT, $T_{k,N}$, for any $k\ll N$.

In our model, each searcher moves according to a continuous-time random walk (CTRW). To describe how such a CTRW searcher moves through a network, one must specify (i) where a searcher moves and (ii) when a searcher moves. For (i), we assume that the discrete-time process obtained by observing the sequence of nodes visited by the searcher is a discrete-time Markov chain. For (ii), we first assume that the waiting time between moves is exponentially distributed. In this case, the searcher is a continuous-time Markov chain (CTMC). We then generalize (ii) by allowing the waiting time to come from a general family of probability distributions. 

%%%%%%%%%%%%%%%%%%%%%
\begin{figure}[t]%[htp]
\centering
\includegraphics[width=.6\linewidth]{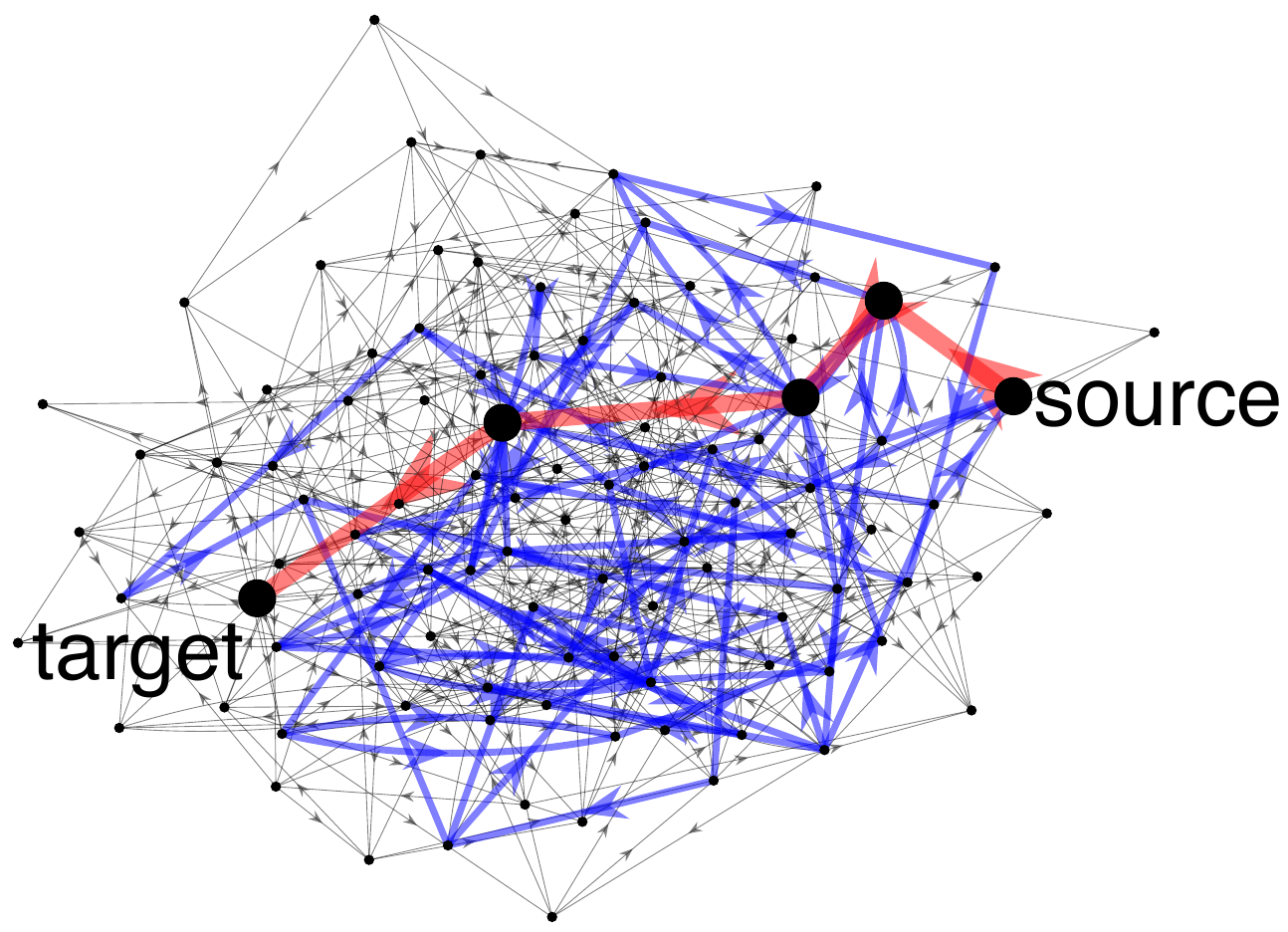}
\caption{\small While a typical searcher/spreader (blue trajectory) wanders around the network before reaching the target, the fastest searcher/spreader (red trajectory) out of many follows a certain geodesic path(s) from its initial position (source) to the target.}
\label{figschem}
\end{figure}
%%%%%%%%%%%%%%%%%%%%%

Our results show that extreme FPTs are much faster than single FPTs,
\begin{align}\label{fast}
T_{N}\ll \tau,%\quad\text{if }N\gg1,
\end{align}
less variable than single FPTs, and independent of much of the details of the network. In particular, while a typical searcher explores much of the network before finding the target, the fastest searcher follows a certain geodesic path(s) from its starting location to the target. See Figure~\ref{figschem} for an illustration. Indeed, our explicit formulas for the distribution and moments of extreme FPTs depend only on the parameters along this geodesic path(s). Furthermore, our moment formulas are accompanied by rigorous convergence rates, which thus allow one to estimate when a particular system is in the extreme regime in \eqref{fast}. That is, while it is quite intuitive that $T_{N}\ll\tau$ for sufficiently large $N$, our analysis determines quantitatively what constitutes ``large $N$,'' and estimates $T_{N}$ in this large $N$ regime.

In addition, we prove qualitatively similar results for so-called \emph{mortal} searchers \cite{abad2010, abad2012, abad2013, yuste2013, meerson2015b, meerson2015, grebenkov2017, ma2020, lawley2020mortal}. Mortal searchers cannot search for the target indefinitely, but rather may be inactivated (degrade/die/evanesce/etc.)\ before finding the target. We find formulas for the moments of the FPT of a single searcher who is conditioned to find the target before a fast inactivation time. Similar to our results for extreme FPTs and the results in \cite{ma2020, lawley2020mortal} for diffusive mortal searchers, we find that such conditional searchers take a direct route the target. 

The rest of the paper is organized as follows. In section~\ref{ctmc}, we analyze the case that the searchers are CTMCs, which means the waiting times are exponentially distributed. In section~\ref{general}, we extend our analysis to more general waiting time distributions. In section~\ref{numerics}, we compare our results to numerical simulations on complex networks. In section~\ref{diffusion}, we use our results to investigate the well-studied problem of extreme FPTs for diffusive searchers. We uncover some potential pitfalls of modeling diffusive or subdiffusive processes involving extreme statistics. In section~\ref{mortal}, we consider mortal searchers. We conclude by discussing our results in the context of several related works. 

%%%%%%%%%%%%%%%%%%%%%%%%%%%%%%%%%%%%%%%%%%%%%%%%%%%%%%%%%%%%%%%%%%%%%%%%%%%%%%%%%%%%%%%%%%%%%%%%%%%%%%%%%%%%%%%%%%%%%%%%%%%%%%%%%%%%%%%%%%%%%%%%%%%%%%%%%%%%%%%%%%%%%%%%%%%%%%%%%%%%%%%%%%%%%%%%%%%%%%%%%%%%%%%%%%
%%%%%%%%%%%%%%%%%%%%%%%%%%%%%%%%%%%%%%%%%%%%%%%%%%%%%%%%%%%%%%%%%%%%%%%%%%%%%%%%%%%%%%%%%%%%%%%%%%%%%%%%%%%%%%%%%%%%%%%%%%%%%%%%%%%%%%%%%%%%%%%%%%%%%%%%%%%%%%%%%%%%%%%%%%%%%%%%%%%%%%%%%%%%%%%%%%%%%%%%%%%%%%%%%%%%%%%%%%%%%%%%
\section{\label{ctmc}Exponential waiting times}

%%%%%%%%%%%%%%%%%%%%%%%%%%%%%%%%%%%%%%%%%%
\subsection{Random walk setup}

Let $X=\{X(t)\}_{t\ge0}$ be a CTMC on a finite or countably infinite state space $I$. The process $X$ is a single searcher (random walker), and the state space $I$ is the nodes (vertices) of the network (graph). There is a directed edge from $i\in I$ to $j\in I$ if $X$ can jump directly from $i$ to $j$. We refer to the time it takes $X$ to jump directly from $i$ to $j$ as the waiting time. Since $X$ is a CTMC, such waiting times are exponentially distributed. 

The dynamics of $X$ are described by its infinitesimal generator matrix \cite{norris1998},
\begin{align*}
Q=\{q(i,j)\}_{i,j\in I}.
\end{align*}
The off-diagonal entries of $Q$ are nonnegative,
\begin{align*}
q(i,j)
\ge0,\quad i\neq j,
\end{align*}
and give the rate that $X$ jumps from state $i\in I$ to state $j\in I$. The diagonal entries of $Q$ are nonpositive,
\begin{align*}%\label{od1}
q(i,i)\le0,\quad i\in I,
\end{align*}
and are chosen so that $Q$ has zero row sums,
\begin{align}\label{od2}
\sum_{j\in I}q(i,j)=0,\quad i\in I.
\end{align}
It is convenient to define
\begin{align*}
q(i)
:=-q(i,i)\ge0,\quad i\in I,
\end{align*}
which is the total rate that $X$ leaves state $i\in I$ (regardless of the state that $X$ jumps to). We assume that
\begin{align*}
\sup_{i\in I}q(i)<\infty,
\end{align*}
which ensures that $X$ cannot take infinitely many jumps in finite time.

%%%%%%%%%%%%%%%%%%%%%%%%%%%%%%%%%%%%%%%%%%%%%%%%%%%
\subsection{Single FPTs}

Define the FPT of $X$ to some target set $I_{{\t}}\subset I$,
\begin{align}\label{tau}
\tau
:=\inf\{t>0:X(t)\in I_{{\t}}\}.
\end{align}
Denote the initial distribution of $X$ by
\begin{align*}%\label{rho}
\rho
=\{\rho(i)\}_{i\in I}
=\{\P(X(0)=i)\}_{i\in I}.
\end{align*}
To avoid trivial cases, we assume that $X$ cannot start directly on the target, which means
\begin{align}\label{away}
I_{{\t}}
\cap\textup{supp}(\rho)
=\varnothing,
\end{align}
where $\textup{supp}(\rho)\subset I$ denotes the support of the distribution $\rho$,
\begin{align*}
\textup{supp}(\rho)
:=\{i\in I:\rho(i)>0\}.
\end{align*}

We refer to $\tau$ as a single FPT, since it is the FPT of a given single searcher $X$. We are interested in studying the fastest (extreme) FPT out of $N\gg1$ searchers,
\begin{align*}
T_{N}
:=\min\{\tau_{1},\dots,\tau_{N}\},
\end{align*}
where $\tau_{1},\dots,\tau_{N}$ are $N$ iid realizations of $\tau$. Since $\tau_{1},\dots,\tau_{N}$ are iid, it is immediate that the distribution of $T_{N}$ is
\begin{align}\label{obv}
\P(T_{N}>t)
=\big(\P(\tau>t)\big)^{N}.
\end{align}

Since $\P(\tau>t)$ decreases monotonically in $t>0$, it is intuitively clear from \eqref{obv} that the large $N$ distribution of $T_{N}$ is determined by the short-time distribution of $\tau$. In this subsection, we find this short-time distribution.
In particular, we prove in Proposition~\ref{cdf} below that 
\begin{align}\label{basic}
%\P_{i_{0}}(\tau\le t)
%\P(\tau\le t\,|\,X(0)=i_{0})
\P(\tau\le t)
\sim\frac{\Lambda}{d!}t^{d}\quad\text{as }t\to0+,
\end{align}
where (i) $d\ge1$ is the smallest number of jumps that $X$ must take to reach $I_{{\t}}$ and (ii) $\Lambda>0$ is a sum of the  products of the jump rates along the shortest paths from $\textup{supp}(\rho)$ to $I_{\t}$ (where the terms in the sum are weighted according to $\rho$). In the remainder of this subsection, we make \eqref{basic} precise.

Define a path $\PP$ of length $d\in\mathbb{Z}_{\ge0}$ from a state $i_{0}\in I$ to a state $i_{d}\in I$ to be a sequence of $d+1$ states in $I$,
\begin{align}\label{path}
\PP%_{d}(i_{0},i_{d-1})
=(\PP(0),\dots,\PP(d))
=(i_{0},i_{1},\dots,i_{d})\in I^{d+1},
\end{align}
so that
\begin{align}\label{explain}
q(\PP(k),\PP(k+1))
%=q(i_{k},i_{k+1})
>0,\quad\text{for }k\in\{0,1,\dots,d-1\}.
\end{align}
In words, \eqref{explain} means that there is a strictly positive probability that $X$ may traverse the path $\PP$. Naturally, we assume that there is a path from the support of $\rho$ to the target,
\begin{align}\label{canfind}
\bigcup_{d\ge1}\{\PP\in I^{d+1}:\PP(0)\in\text{supp}(\rho),\PP(d)\in I_{\t}\}\neq\varnothing.
\end{align}
If \eqref{canfind} is violated, then $\tau=T_{N}=\infty$ almost surely and the problem is trivial.

For a path $\PP\in I^{d+1}$, define $\lambda(\PP)$ to be the product of the rates along the path,
\begin{align}\label{lambda}
\lambda(\PP)
:=\prod_{i=0}^{d-1}q(\PP(i),\PP(i+1))>0.
%:=q(\PP_{0},\PP_{1})q(\PP_{1},\PP_{2})\cdots q(\PP_{d-1},\PP_{d})>0.
\end{align}
Let $\dmin(I_{0},I_{1})\in\mathbb{Z}_{\ge0}$ denote the length of the geodesic path from a set of nodes $I_{0}\subset I$ to another set of nodes $I_{1}\subset I$,
\begin{align}\label{d}
\dmin(I_{0},I_{1})
:=%\begin{cases}
\inf\{d:\PP\in I^{d+1},\PP(0)\in I_{0},\PP(d)\in I_{1}\}. % & \text{if }i\neq j\\
%0 & \text{if }i=j,
%\end{cases}
\end{align}
In words, $\dmin(I_{0},I_{1})$ is the smallest number of jumps required for $X$ to move from $I_{0}$ to $I_{1}$.

Define the set of all paths from $I_{0}$ to $I_{1}$ with the minimum length $\dmin(I_{0},I_{1})$ in \eqref{d},
\begin{align}\label{S}
\begin{split}
\mathcal{S}(I_{0},I_{1})
:=\{\PP\in I^{d+1}:\PP(0)\in I_{0},\PP(d)\in I_{1},d=\dmin(I_{0},I_{1})\}.
\end{split}
\end{align}
Define
\begin{align}\label{Lambdathm}
\Lambda(\rho,I_{1})
%:=\sum_{i\in \Imin(\textup{supp}(\rho),I_{1})}\rho(i)\sum_{\PP\in\mathcal{S}(i,I_{1})}\lambda(\PP).
:=\sum_{\PP\in\mathcal{S}(\textup{supp}(\rho),I_{1})}\rho(\PP(0))\lambda(\PP).
\end{align}
The quantity $\Lambda(\rho,I_{1})$ is easiest to understand by first considering the case that $\rho(i_{0})=1$ for some $i_{0}\in I$ (meaning $\text{supp}(\rho)=i_{0}=X(0)$ almost surely). In this case, if there is a unique path with the minimum number of jumps $\dmin(i_{0},I_{1})$, then $\Lambda(\rho,I_{1})$ is simply the product of the jump rates along this geodesic path ($\lambda(\PP)$ in \eqref{lambda}). If there are multiple such geodesic paths, then $\Lambda(\rho,I_{1})$ sums the products of the jump rates along these paths. Finally, if the support of $\rho$ is not concentrated at a single point, then $\Lambda(\rho,I_{1})$ simply sums the products of the jump rates along all the geodesic paths, where the sum is weighted according to the initial distribution $\rho$.

With these definitions in place, we can now give the short-time behavior of the distribution of $\tau$. Throughout this paper,
\begin{align*}
\text{``$f\sim g$'' means $f/g\to1$.}
\end{align*}

%%%%%%%%%%%%%%%%%%%%%%%%%%
\begin{proposition}\label{cdf}
We have that
\begin{align*}
%\P_{i_{0}}(\tau\le t)
%\P(\tau\le t\,|\,X(0)=i_{0})
\P(\tau\le t)
\sim\frac{\Lambda}{d!}t^{d}\quad\text{as }t\to0+,
\end{align*}
where 
\begin{align*}
d
&=\dmin(\textup{supp}(\rho),I_{{\t}})\in\mathbb{Z}_{>0},\\
\Lambda
&=\Lambda(\rho,I_{{\t}})>0.
\end{align*}
\end{proposition}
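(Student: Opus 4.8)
The plan is to compute the short-time asymptotics of $\P(\tau \le t)$ by expanding the transition probabilities of the CTMC in powers of $t$ and identifying the leading-order term. The starting point is the observation that $\P(\tau \le t) = \P(X(s) \in I_{\t} \text{ for some } s \in (0,t])$, and since reaching $I_{\t}$ requires at least $d = \dmin(\textup{supp}(\rho), I_{\t})$ jumps, the leading behavior should come from trajectories that make exactly $d$ jumps, each along a geodesic path in $\mathcal{S}(\textup{supp}(\rho), I_{\t})$. A single jump of the CTMC from state $i$ to state $j$ in time window $[0,t]$ has probability $\sim q(i,j)\,t$ for small $t$, so a sequence of $d$ jumps along a fixed path $\PP$ should contribute at order $t^d$ with coefficient $\lambda(\PP)/d!$, the $1/d!$ arising from integrating over the ordered jump times in the simplex $\{0 < s_1 < \cdots < s_d < t\}$.

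First I would make this precise by writing the probability of following a specific jump sequence. Conditioning on the initial state $i_0$ (weighted by $\rho(i_0)$) and summing over all paths, I would split $\P(\tau \le t)$ into (a) the contribution from trajectories taking exactly $d$ jumps along a geodesic path, and (b) an error term collecting everything else. For part (a), the probability that the embedded discrete chain follows a particular path $\PP$ and that the $d$ exponential holding times sum appropriately to land in $I_{\t}$ before time $t$ can be written as an iterated integral; expanding the exponential holding-time densities $q(\PP(i)) e^{-q(\PP(i)) s}$ to leading order (replacing each density by $q(\PP(i))$ and each survival factor by $1$) and using $\mathrm{vol}\{0 < s_1 < \cdots < s_d < t\} = t^d/d!$ yields the claimed coefficient $\sum_{\PP} \rho(\PP(0))\lambda(\PP)/d! = \Lambda/d!$ after recognizing that $\prod_i q(\PP(i))$ times the transition probabilities $q(\PP(i),\PP(i+1))/q(\PP(i))$ of the embedded chain telescopes to $\lambda(\PP) = \prod_i q(\PP(i),\PP(i+1))$.

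The main obstacle is controlling the error term and showing it is $o(t^d)$. There are two sources of error to bound: the corrections from the exponential factors (both the holding-time densities and the survival probabilities differ from their leading-order approximations by $O(t)$), and the contributions from trajectories that do \emph{not} follow a minimal geodesic. The latter includes paths of length exactly $d$ that are not geodesics (these cannot reach $I_{\t}$, so contribute nothing) and, more importantly, any route that reaches $I_{\t}$ only after strictly more than $d$ jumps, which I expect to contribute at order $t^{d+1}$ or higher. To handle this rigorously I would invoke the assumption $\sup_{i} q(i) < \infty$ to obtain a uniform bound: letting $\bar q := \sup_i q(i)$, the probability that $X$ takes at least $m$ jumps in $[0,t]$ is dominated by a Poisson-type tail $\sum_{k \ge m}(\bar q t)^k/k!$, which is $O(t^m)$. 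Applying this with $m = d+1$ shows the ``extra jumps'' contribution is $O(t^{d+1})$, and a similar uniform estimate controls the Taylor remainders in part (a).

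I would organize the argument as upper and lower bounds that pinch $\P(\tau \le t)/(t^d/d!)$ to $\Lambda$. For the lower bound, I would restrict to the event that $X$ follows a fixed geodesic path making exactly $d$ jumps all before time $t$ and sum over geodesics. For the upper bound, I would bound $\P(\tau \le t)$ by the probability of reaching $I_{\t}$ in at most $d$ jumps (since fewer than $d$ is impossible and the at-least-$(d+1)$-jumps event is the $O(t^{d+1})$ remainder just estimated), then expand. Care is needed only when $I$ is countably infinite and $\textup{supp}(\rho)$ is infinite, in which case dominated convergence using the uniform rate bound justifies interchanging the limit $t \to 0+$ with the sum over initial states. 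The finiteness of $\mathcal{S}(\textup{supp}(\rho), I_{\t})$ is not automatic in the infinite case, but $\Lambda > 0$ and the assumption \eqref{canfind} guarantee the sum defining $\Lambda$ is well-posed, which I would confirm is implicitly assumed.
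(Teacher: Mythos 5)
Your proposal is correct, and its skeleton matches the paper's proof: condition on the embedded jump chain following a geodesic path in $\mathcal{S}(\textup{supp}(\rho),I_{\t})$, show each such path contributes $\rho(\PP(0))\lambda(\PP)\,t^{d}/d!$ to leading order, argue that everything else is $o(t^{d})$, and use dominated convergence (via $\overline{q}:=\sup_{i\in I}q(i)<\infty$) to sum over countably many paths and initial states. The genuine difference is the computational engine for the per-path asymptotics. You expand the iterated integral over the ordered-jump-time simplex $\{0<s_{1}<\cdots<s_{d}<t\}$ directly, replacing each holding-time density $q e^{-qs}$ by $q$ and each survival factor by $1$, and read off the $t^{d}/d!$ from the simplex volume. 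The paper instead observes that, conditional on the path, the traversal time is a sum of $d$ independent exponentials, and invokes a Laplace--Stieltjes/Tauberian lemma (Lemma~\ref{lemmaexp}) for the short-time distribution of such a sum. Your route is more elementary and self-contained for the CTMC case; the paper's lemma buys reusability, since the identical statement later handles the general waiting times of Proposition~\ref{cdf2}. Two further remarks. First, you are actually more careful than the paper on one point: your Poisson-domination bound $\P(M(t)\ge d+1)=O(t^{d+1})$ explicitly disposes of trajectories that reach $I_{\t}$ only after strictly more than $d$ jumps, a contribution that the paper's displayed decomposition over geodesic paths silently drops (it is indeed harmless, for exactly the reason you give). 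Second, your closing worry that the well-posedness of the sum defining $\Lambda$ is ``implicitly assumed'' is unnecessary: since $\P(\mathcal{E}_{k})=\lambda(\PP_{k})/\prod_{i=0}^{d-1}q(\PP_{k}(i))$, one has $\lambda(\PP_{k})\le \overline{q}^{\,d}\,\P(\mathcal{E}_{k})$, so summing over paths gives $\Lambda\le\overline{q}^{\,d}<\infty$ automatically from the standing assumption $\sup_{i\in I}q(i)<\infty$; this is the same bound that powers your (and the paper's) dominated convergence step.
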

%%%%%%%%%%%%%%%%%%%%%%%%%%

The proof of Proposition~\ref{cdf}, as well as the proofs of the theorems and propositions below, are given in the appendix.

%%%%%%%%%%%%%%%%%%%%%%%%%%%%%%%%%%%%%%%%%%%%%%%%%%%
\subsection{Fastest FPT}

Having determined the short-time distribution of a single FPT $\tau$ in Proposition~\ref{cdf}, we now determine the large $N$ distribution and moments of the fastest FPT $T_{N}$. In Theorem~\ref{main} below, we prove that a certain rescaling of $T_{N}$ converges in distribution to a Weibull random variable. Before stating the theorem, we recall two requisite definitions. 

\begin{definition}
A sequence of random variables $\{Z_{N}\}_{N\ge1}$ \emph{converges in distribution} to a random variable $Z$ if
\begin{align*}
\P(Z_{N}\le z)\to\P(Z\le z)\quad\text{as }N\to\infty,
\end{align*}
for all points $z\in\R$ such that $F(z):=\P(Z\le z)$ is continuous. If this holds, then we write
\begin{align*}
Z_{N}\to_{\dist}Z\quad\text{as }N\to\infty.
\end{align*}
\end{definition}

\begin{definition}
A random variable $Z\ge0$ has a \emph{Weibull distribution} with scale parameter $t>0$ and shape parameter $d>0$ if
\begin{align}\label{zweibull}
\P(Z>z)
=\exp(-(z/t)^{d}),\quad z\ge0.
\end{align}
If \eqref{zweibull} holds, then we write
\begin{align*}
Z=_{\textup{d}}\textup{Weibull}(t,d).
\end{align*}
\end{definition}

\begin{theorem}\label{main}
Let $d\ge1$ and $\Lambda>0$ be as in Proposition~\ref{cdf} and define
\begin{align*}
%d
%&=\dmin(\rho,I_{\t})\in\mathbb{Z}_{>0},\\
%\Lambda
%&=\Lambda(\rho,I_{{\t}})>0,\\
A
&=\frac{\Lambda}{d!}>0.
\end{align*}
The following rescaling of $T_{N}$ converges in distribution to a Weibull random variable,
\begin{align}\label{cd}
(AN)^{1/d}T_{N}
%=\frac{T_{N}}{a_{N}}
\to_{\dist}
\textup{Weibull}(1,d)\quad\text{as }N\to\infty.
%\quad\text{where }a_{N}
%:=\Big(\frac{1}{{{A}} N}\Big)^{1/d}.
\end{align}
Suppose further that
\begin{align*}
\E[T_{N}]<\infty\quad\text{for some }N\ge1.
\end{align*}
Then for each moment $m\in(0,\infty)$, we have that
\begin{align}\label{momentformula}
\E[(T_{N})^{m}]
%&= (a_{N})^{m}\Gamma(1+m/p)+o\big((a_{N})^{m}\big)\quad\text{as }\\
%&= (AN)^{-m/p}\Big(\Gamma(1+m/p)+o(1)\Big)\quad\text{as }N\to\infty,
&\sim \frac{\Gamma(1+m/d)}{(AN)^{m/d}}\quad\text{as }N\to\infty.
\end{align}
\end{theorem}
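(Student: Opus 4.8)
The plan is to treat the two assertions separately, using Proposition~\ref{cdf} as the only input for the distributional limit \eqref{cd} and bringing in the integrability hypothesis only for the moment asymptotics \eqref{momentformula}.

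For \eqref{cd} I would start from the exact identity \eqref{obv}, namely $\P(T_{N}>t)=(\P(\tau>t))^{N}$, and evaluate it at the rescaled argument $t=z(AN)^{-1/d}$ for fixed $z\ge0$. Writing $p(t):=\P(\tau>t)$, Proposition~\ref{cdf} gives $p(t)=1-At^{d}(1+o(1))$ as $t\to0+$ with $A=\Lambda/d!$. Since $z(AN)^{-1/d}\to0$, substitution yields $p(z(AN)^{-1/d})=1-(z^{d}/N)(1+o(1))$, so that $\P((AN)^{1/d}T_{N}>z)=(1-(z^{d}/N)(1+o(1)))^{N}\to\exp(-z^{d})$ by the elementary limit $(1-c_{N}/N)^{N}\to e^{-c}$ whenever $c_{N}\to c$. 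This is precisely the Weibull$(1,d)$ survival function \eqref{zweibull}, establishing \eqref{cd}.

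For \eqref{momentformula} the strategy is to upgrade this distributional limit to convergence of moments, using that a Weibull$(1,d)$ variable $Z$ satisfies $\E[Z^{m}]=\Gamma(1+m/d)$. Setting $Z_{N}:=(AN)^{1/d}T_{N}$ and using the tail formula $\E[Z_{N}^{m}]=m\int_{0}^{\infty}z^{m-1}\P(Z_{N}>z)\,\dd z$, I would split the underlying time integral $\int_{0}^{\infty}t^{m-1}p(t)^{N}\,\dd t$ at a small fixed $\delta>0$. On the short-time piece $t\le\delta$, Proposition~\ref{cdf} permits the bound $p(t)\le1-\tfrac{A}{2}t^{d}\le\exp(-\tfrac{A}{2}t^{d})$ for $\delta$ small, which after rescaling gives the uniform bound $p(z(AN)^{-1/d})^{N}\le\exp(-z^{d}/2)$; since $z^{m-1}\exp(-z^{d}/2)$ is integrable, dominated convergence together with the pointwise limit above sends this piece to $m\int_{0}^{\infty}z^{m-1}e^{-z^{d}}\,\dd z=\Gamma(1+m/d)$.

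The main obstacle is the long-time piece $t\ge\delta$, which must be shown to vanish even after multiplication by the growing prefactor $(AN)^{m/d}$, and this is exactly where the hypothesis $\E[T_{N_{0}}]<\infty$ for some $N_{0}$ is needed. From $\int_{0}^{\infty}p(t)^{N_{0}}\,\dd t<\infty$ and the monotonicity of $p$ I would first extract polynomial tail decay via $\tfrac{t}{2}p(t)^{N_{0}}\le\int_{t/2}^{t}p(s)^{N_{0}}\,\dd s\to0$, giving $p(t)=o(t^{-1/N_{0}})$ and hence $\int_{\delta}^{\infty}t^{m-1}p(t)^{N_{1}}\,\dd t<\infty$ once the integer $N_{1}>mN_{0}$. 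Then, writing $p(t)^{N}\le p(\delta)^{\,N-N_{1}}p(t)^{N_{1}}$ for $t\ge\delta$ and using $p(\delta)<1$, the long-time integral is bounded by a constant times $p(\delta)^{N}$, and this geometric factor overwhelms $(AN)^{m/d}$ as $N\to\infty$. Combining the two pieces yields $\E[Z_{N}^{m}]\to\Gamma(1+m/d)$, which is \eqref{momentformula}. The delicate point throughout is that only a \emph{single} finite moment is assumed, so the uniform tail control needed for \emph{all} $m\in(0,\infty)$ must be bootstrapped from that one hypothesis through the polynomial-decay estimate and the geometric-in-$N$ gain.
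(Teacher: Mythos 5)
Your proposal is correct, but it takes a genuinely different route from the paper. The paper's proof of Theorem~\ref{main} is a two-line citation: it combines Proposition~\ref{cdf} with Theorems~2 and 3 of \cite{lawley2020comp}, which are general results converting a short-time power law $\P(\tau\le t)\sim At^{d}$ into the Weibull limit and the moment asymptotics. You instead prove that implication from scratch: the exact identity $\P(T_{N}>t)=(\P(\tau>t))^{N}$ evaluated at $t=z(AN)^{-1/d}$ gives the distributional limit; the moment formula follows by splitting $m\int_{0}^{\infty}t^{m-1}\P(\tau>t)^{N}\,\dd t$ at a fixed $\delta$, dominating the rescaled short-time piece by $z^{m-1}e^{-z^{d}/2}$ (valid since $\P(\tau>t)\le\exp(-\tfrac{A}{2}t^{d})$ for $t\le\delta$ small), and killing the long-time piece with the geometric factor $\P(\tau>\delta)^{N-N_{1}}$, where the finiteness of $\int_{\delta}^{\infty}t^{m-1}\P(\tau>t)^{N_{1}}\,\dd t$ is bootstrapped from the single hypothesis $\E[T_{N_{0}}]<\infty$ via the monotonicity estimate $\tfrac{t}{2}\P(\tau>t)^{N_{0}}\le\int_{t/2}^{t}\P(\tau>s)^{N_{0}}\,\dd s\to0$. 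All of these steps check out; in particular you correctly isolated and handled the one delicate point, that a single finite moment must control all moments $m\in(0,\infty)$, and your implicit claim $\P(\tau>\delta)<1$ is justified because Proposition~\ref{cdf} forces $\P(\tau\le\delta)>0$. What the paper's route buys is brevity and, through the cited theorems, explicit convergence rates (which the paper later uses to quantify when a system is in the extreme regime); what your route buys is a self-contained, elementary argument---essentially a reproof of the content of the cited external theorems---that makes transparent exactly where the short-time asymptotics and the integrability hypothesis each enter.
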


The convergence in \eqref{cd} means roughly that the distribution of $T_{N}$ for large $N$ is given by
\begin{align}\label{rough}
T_{N}\approx_{\dist}\textup{Weibull}\big((AN)^{-1/d},d\big). %\quad\text{for $N$ sufficiently large}.
\end{align}
Further, the general formula for the $m$th moment in \eqref{momentformula} means that as $N\to\infty$ the mean and variance are
\begin{align}\label{mv}
\begin{split}
\E[T_{N}]
&\sim \frac{\Gamma(1+\frac{1}{d})}{(AN)^{1/d}},\\
\textup{Variance}(T_{N})
&\sim\frac{\Gamma(1+\frac{2}{d})
-\big(\Gamma(1+\frac{1}{d})\big)^{2}}{(AN)^{2/d}}.
\end{split}
\end{align}

Compared to a single FPT, Theorem~\ref{main} implies that extreme FPTs are (i) faster, (ii) less variable, and (iii) less affected by the size/structure/details of the network. To see points (i) and (ii), note the vanishing mean and variance in \eqref{mv} (which is implied by the vanishing moments in \eqref{momentformula}). To see point (iii), notice that Theorem~\ref{main} implies that the limiting distribution of $T_{N}$ is completely determined by the parameters $N$, $\Lambda$, and $d$. In particular, the only network parameters that enter into the large $N$ distribution of $T_{N}$ are along the geodesic path(s) from the initial distribution to the target. Therefore, if there are many searchers ($N\gg1$), the distribution of $T_{N}$ is unaffected by changes to the network outside this geodesic path(s). This is illustrated in Figure~\ref{figsame}, which depicts two vastly different networks that nonetheless have the same extreme FPT distributions.

%%%%%%%%%%%%%%%%%%%%%
\begin{figure}[t]%[htp]
\centering
\includegraphics[width=.6\linewidth]{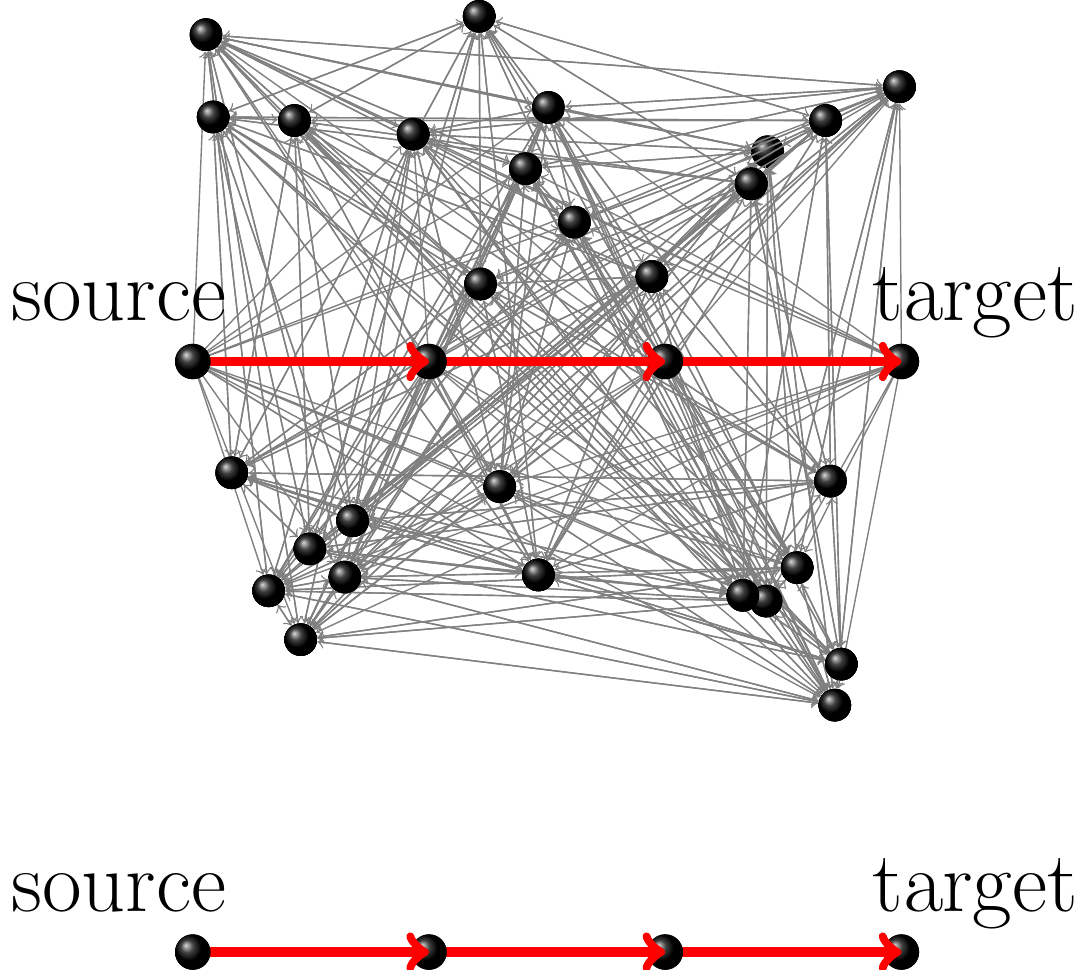}
\caption{\small The fastest searcher out of $N\gg1$ searchers moving through the top network takes the geodesic path from initial position (source) to the target, which is highlighted in red. Theorems~\ref{main} and \ref{kth} imply that the statistics of the extreme FPTs depend only on network parameters along this geodesic path. Therefore, deleting the nodes outside this geodesic path does not affect the extreme FPTs. Hence, the complicated top network and the simple bottom network have the same extreme FPTs for large $N$ (assuming the rates along the geodesic path are identical in the top and bottom networks).
}
\label{figsame}
\end{figure}
%%%%%%%%%%%%%%%%%%%%%

In addition, \eqref{momentformula} allows us to estimate when $T_{N}$ is in the extreme Weibull regime of Theorem~\ref{main}. In particular, we are assured by \eqref{momentformula} that $T_{N}$ is in this large $N$ regime for all moments $m\ge1$ if $(AN)^{-1/d}$ is much smaller than the other timescales in the problem, which means
\begin{align*}%\label{condgood}
%\frac{1}{(AN)^{1/d}}
%\ll\frac{1}{\sup_{i\in I}q(i)}.
N
\gg\frac{(\sup_{i\in I}q(i))^{d}}{A}
=\frac{d!(\sup_{i\in I}q(i))^{d}}{\Lambda},
\end{align*}
since $\sup_{i\in I}q(i)<\infty$ is the fastest rate in the problem.

%%%%%%%%%%%%%%%%%%%%%%%%%%%%%%%%%%%%%%%%%%%%%%%%%%%
\subsection{$k$th fastest FPT}

We now generalize Theorem~\ref{main} on the fastest FPT to the $k$th fastest FPT,
\begin{align*}%\label{tkn}
T_{k,N}
:=\min\big\{\{\tau_{1},\dots,\tau_{N}\}\backslash\cup_{j=1}^{k-1}\{T_{j,N}\}\big\},%\quad k\in\{1,\dots,N\},
\end{align*}
where $T_{1,N}:=T_{N}$. The large $N$ distribution of $T_{k,N}$ is described in terms of a generalized Gamma random variable.

\begin{definition}
A random variable $X\ge0$ has a \emph{generalized Gamma distribution} with parameters $t>0$, $d>0$, $k>0$ if
\begin{align}\label{gGamma}
\P(Z>z)
=\frac{\Gamma(k,(z/t)^{d})}{\Gamma(k)},\quad z\ge0,
\end{align}
where $\Gamma(a,z):=\int_{z}^{\infty}u^{a-1}e^{-u}\,\dd u$ denotes the upper incomplete gamma function. If \eqref{gGamma} holds, then we write
\begin{align*}
Z=
_{\textup{d}}\textup{gen}\Gamma(t,d,k).
\end{align*}
\end{definition}

%%%%%%%%%%%%%%%%%%%%%%%%%%%%%%%%%%%%
\begin{theorem}\label{kth}
Fix $k\ge1$ and let $A$ be as in Theorem~\ref{main}. The following rescaling of $T_{k,N}$ converges in distribution to a generalized Gamma random variable,
\begin{align*}
(AN)^{1/d}T_{k,N}
%=\frac{T_{k,N}}{a_{N}}
\to_{\dist}
\textup{gen}\Gamma(1,d,k)\quad\text{as }N\to\infty.
%\quad\text{where }a_{N}
%:=\Big(\frac{1}{{{A}} N}\Big)^{1/p}.
\end{align*}
Suppose further that
\begin{align*}
\E[T_{N}]<\infty\quad\text{for some }N\ge1.
\end{align*}
Then for each moment $m\in(0,\infty)$, we have that
\begin{align}\label{momentformula2}
\E[(T_{k,N})^{m}]
&\sim\frac{\Gamma(k+m/d)/\Gamma(k)}{(AN)^{m/d}}
\quad\text{as }N\to\infty.
\end{align}
\end{theorem}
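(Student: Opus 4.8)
The plan is to first establish the convergence in distribution and then upgrade to convergence of moments via a uniform integrability argument. For the distributional limit, I would start from the exact order-statistic representation of the $k$th smallest value. Writing $F(t):=\P(\tau\le t)$, the event $\{T_{k,N}>t\}$ is precisely the event that at most $k-1$ of the iid samples $\tau_{1},\dots,\tau_{N}$ lie in $[0,t]$, so
\begin{align*}
\P(T_{k,N}>t)=\sum_{j=0}^{k-1}\binom{N}{j}F(t)^{j}\big(1-F(t)\big)^{N-j}.
\end{align*}
I would then substitute the rescaling $t=z(AN)^{-1/d}$ and send $N\to\infty$ with $z$ fixed, so $t\to0+$ and Proposition~\ref{cdf} gives $F(t)\sim At^{d}=z^{d}/N$, i.e.\ $NF(t)\to z^{d}$. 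Term by term one checks $\binom{N}{j}F(t)^{j}\to z^{dj}/j!$ and $(1-F(t))^{N-j}\to e^{-z^{d}}$, so
\begin{align*}
\P\big((AN)^{1/d}T_{k,N}>z\big)\longrightarrow\sum_{j=0}^{k-1}\frac{(z^{d})^{j}}{j!}\,e^{-z^{d}}.
\end{align*}
This is the standard Poisson tail, which for integer $k$ equals the regularized upper incomplete gamma function $\Gamma(k,z^{d})/\Gamma(k)$; comparing with \eqref{gGamma} at scale $t=1$ identifies the limit as $\textup{gen}\Gamma(1,d,k)$, as claimed.

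For the moments, I would first record that if $Z=_{\textup{d}}\textup{gen}\Gamma(1,d,k)$ then $Z=_{\textup{d}}W^{1/d}$ with $W$ a standard $\textup{Gamma}(k)$ variable, since $\P(W^{1/d}>z)=\P(W>z^{d})=\Gamma(k,z^{d})/\Gamma(k)$. Hence $\E[Z^{m}]=\E[W^{m/d}]=\Gamma(k+m/d)/\Gamma(k)$, which is exactly the constant appearing in \eqref{momentformula2}. Given the convergence in distribution of $Y_{N}:=(AN)^{1/d}T_{k,N}$, it then suffices to show $\{Y_{N}^{m}\}_{N}$ is uniformly integrable, for which the standard route is to bound $\sup_{N}\E[Y_{N}^{m'}]<\infty$ for some $m'>m$.

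The uniform bound is where the real work lies, and I would obtain it by reducing to the already-proved $k=1$ case (Theorem~\ref{main}). Partition the $N$ searchers into $k$ disjoint blocks of size $M=\lfloor N/k\rfloor$ and let $G_{1},\dots,G_{k}$ be the block minima, which are independent copies of $T_{M}$. A counting argument gives $T_{k,N}\le\max_{i}G_{i}$: each block contributes a sample no larger than $\max_{i}G_{i}$, so at least $k$ of the $\tau$'s are $\le\max_{i}G_{i}$, whence the $k$th smallest is. Using $(\max_{i}G_{i})^{m'}\le\sum_{i}G_{i}^{m'}$ yields $\E[(T_{k,N})^{m'}]\le k\,\E[(T_{M})^{m'}]$. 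Theorem~\ref{main} gives $\sup_{M\ge M_{0}}(AM)^{m'/d}\E[(T_{M})^{m'}]<\infty$, and since $M\ge N/(2k)$ for large $N$,
\begin{align*}
\E[Y_{N}^{m'}]=(AN)^{m'/d}\E[(T_{k,N})^{m'}]\le k\,(N/M)^{m'/d}\,(AM)^{m'/d}\E[(T_{M})^{m'}]\le C,
\end{align*}
uniformly in $N$. This delivers uniform integrability, and combined with the distributional limit gives $\E[Y_{N}^{m}]\to\Gamma(k+m/d)/\Gamma(k)$, i.e.\ \eqref{momentformula2}.

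The main obstacle is precisely this last step: convergence in distribution never implies convergence of moments without integrability control, and a naive tail bound breaks down in the regime where the rescaled time is not small, so that Proposition~\ref{cdf} no longer applies. The block-decomposition trick circumvents this by transferring the question to the $k=1$ moment estimate of Theorem~\ref{main}, which already encodes the necessary non-small-time control through its hypothesis $\E[T_{N}]<\infty$ for some $N$.
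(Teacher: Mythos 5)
Your proposal is correct, but it takes a genuinely different route from the paper. The paper's own proof is a one-line citation: it combines Proposition~\ref{cdf} (the short-time asymptotics $\P(\tau\le t)\sim At^{d}$) with Theorems~5 and 6 of the reference \cite{lawley2020comp}, which are general-purpose results stating that any iid family whose common distribution has this power-law short-time behavior has $k$th order statistics converging (after the $(AN)^{1/d}$ rescaling) to $\textup{gen}\Gamma(1,d,k)$ in distribution and in all moments. What you have done is essentially re-derive the content of those cited theorems from scratch: the exact binomial representation $\P(T_{k,N}>t)=\sum_{j=0}^{k-1}\binom{N}{j}F(t)^{j}(1-F(t))^{N-j}$ plus $NF(z(AN)^{-1/d})\to z^{d}$ gives the Poisson tail $e^{-z^{d}}\sum_{j=0}^{k-1}z^{dj}/j!=\Gamma(k,z^{d})/\Gamma(k)$, and your block-decomposition bound $T_{k,N}\le\max_{i\le k}G_{i}$ (with $G_{i}$ independent copies of $T_{\lfloor N/k\rfloor}$) reduces uniform integrability of the rescaled $k$th order statistic to the $k=1$ moment asymptotics of Theorem~\ref{main}, which is a clean and correct way to get $\sup_{N}\E[Y_{N}^{m'}]<\infty$ for $m'>m$. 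The trade-off: the paper's approach is shorter and inherits explicit convergence-rate information from the cited theorems (which the paper advertises elsewhere), while yours is self-contained, elementary, and makes visible exactly where the hypothesis $\E[T_{N}]<\infty$ enters — namely, through the moment bound for the minimum that anchors the uniform integrability argument. One small caveat: your argument leans on Theorem~\ref{main} as a black box for the moment bound, which is legitimate here since that theorem is established independently earlier in the paper, but it means your proof of the $k$th-order case is not independent of the $k=1$ case, whereas the paper's citation handles all $k\ge1$ on the same footing.
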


Similar to \eqref{rough}, Theorem~\ref{kth} means roughly that the distribution of $T_{k,N}$ for large $N$ is
\begin{align*}
T_{k,N}
\approx_{\dist}
\textup{gen}\Gamma\big((AN)^{-1/d},d,k\big). %\quad\text{for $N$ sufficiently large}.
\end{align*}
Further, the general formula for the $m$th moment in \eqref{momentformula2} means that as $N\to\infty$ the mean and variance are
\begin{align*}
\E[T_{k,N}]
&\sim\frac{\Gamma(k+\frac{1}{d})/\Gamma(k)}{(AN)^{1/d}},\\
%\textup{Variance}(T_{k,N})
%&=(a_{N})^{2}\left[\frac{\Gamma(k+2/p)}{\Gamma(k)}-\Big(\frac{\Gamma(k+1/p)}{\Gamma(k)}\Big)^{2}\right]
%+o\big((a_{N})^{2}\big)\quad\text{as }N\to\infty.\\
\textup{Variance}(T_{k,N})
&\sim\frac{\Gamma(k+\frac{2}{d})/\Gamma(k)
-\big(\Gamma(k+\frac{1}{d})/\Gamma(k)\big)^{2}}{(AN)^{2/d}}.
\end{align*}

%%%%%%%%%%%%%%%%%%%%%%%%%%%%%%%%%%%%%%%%%%%%%%%%%%%%%%%%%%%%%%%%%%%%%%%%%%%%%%%%%%%%%%%%%%%%%%%%%%%%%%%%%%%%%%%%%%%%%%%%%%%%%%%%%%%%%%%%%%%%%%%%%%%%%%%%%%%%%%%%%%%%%%%%%%%%%%%%%%%%%%%%%%%%%%%%%%%%%%%%%%%%%%%%%%%%%%%%%%%%%%%%
\section{\label{general}General waiting times}

%%%%%%%%%%%%%%%%%%%%%%%%%%%%%%%%%%%%%%%%%%
\subsection{Random walk setup}

In the previous section, we took the random walkers to be CTMCs. From a modeling perspective, there are three key restrictions implied by this assumption. First, the waiting times are always exponentially distributed. Second, the waiting times depend only on the current state. That is, %given that the walker will move from $i$ to $j$, 
the waiting time to jump from node $i$ to node $j$ depends only on $i$ and not on the destination $j$ (the waiting time is exponential with rate $q(i)$ even though the ``jump rate'' is $q(i,j)$, see below). Naturally, in many applications the time it takes to move depends on the destination. Third, the waiting times may be arbitrarily fast. That is, for any small time $\eps>0$, there is a strictly positive probability that the searcher will jump from $i$ to $j$ in a time less than $\eps$ (as long as $q(i,j)>0$). In this section, we remove these three restrictions. 

Let $X=\{X(t)\}_{t\ge0}$ be a continuous-time stochastic process on a discrete state space $I$ (as a technical point, we assume paths of $X$ are continuous from the right). In contrast to section~\ref{ctmc}, we assume for simplicity that $I$ is finite. Informally, we suppose that the CTRW $X$ walks on the network $I$ in the following manner. From a state $i\in I$, the walker chooses its next state according to a probability distribution that depends on only its current state $i$. Then, having chosen that the next state is some $j\in I$, the walker waits at its current state until time $S>0$, where $S$ is chosen according to a probability distribution that may depend on both the current state $i$ and the next state $j$. 

More precisely, let $J(0),J(1),\dots$ be the jump times of $X$, which are defined by \cite{norris1998}
\begin{align*}
J(0)
&=0,\\
J(n+1)
&=\inf\{t\ge J(n):X(t)\neq X(J(n))\},\quad n\ge0.
\end{align*}
%for $n=0,1,\dots$. 
Further, define the waiting times $S(1),S(2),\dots$ by 
\begin{align*}
S(n)
:=\begin{cases}
J(n)-J(n-1) & \text{if }J(n-1)<\infty,\\
\infty & \text{otherwise},
\end{cases}
\quad n\ge1.
\end{align*}
%for $n=0,1,\dots$.
In words, the $n$th jump of $X$ happens at time $t=J(n)$, and $X$ waits at its new state for time $S(n+1)$.

Assume that the discrete-time process obtained by observing $X$ only at the jump times,
\begin{align}\label{Y}
Y(n)
:=X(J(n)),\quad n\ge0,
\end{align}
is a discrete-time Markov chain. Further, assume that for each $n\ge1$, conditional on $Y(0),\dots,Y(n)$, the waiting times $S(1),\dots,S(n)$ are independent random variables with
\begin{align}\label{both}
\P(S(m)\le t)
=F_{Y(m-1),Y(m)}(t), \quad t\in\R,
\end{align}
where $\{F_{i,j}(t)\}_{i,j\in I}$ are a given set of cumulative distribution functions. In addition, assume that each $F_{i,j}(t)$ satisfies
\begin{align}
F_{i,j}(t)
&=0,\quad t\le {t_{0}(i,j)},\label{aij}\\
F_{i,j}(t)
&\sim {\lambda (i,j)}(t-{t_{0}(i,j)})^{r},\quad\text{as }t\to {t_{0}(i,j)}+,\label{short}
\end{align}
where ${t_{0}(i,j)}\ge0$, ${\lambda (i,j)}>0$, and $r>0$. Assume that
\begin{align*}
\sup_{i,j\in I}{\lambda (i,j)}<\infty,
\end{align*}
which ensures that $X$ cannot take infinitely many jumps in finite time (that is, $X$ is not explosive). 

The assumption in \eqref{both} means that the waiting time $S(n)$ from state $Y(n-1)$ to state $Y(n)$ can depend on both $Y(n-1)$ and $Y(n)$. The assumption in \eqref{aij} means that ${t_{0}(i,j)}\ge0$ is the fastest possible waiting time from state $i$ to state $j$. From a modeling perspective, the benefit of this assumption is that it allows one to ensure that waiting times cannot be arbitrarily small by setting ${t_{0}(i,j)}>0$. The assumption in \eqref{short} describes the waiting time distribution near the fastest waiting time ${t_{0}(i,j)}$.

We note that if $X$ is merely a CTMC as in section~\ref{ctmc}, then 
\begin{align}\label{fij}
\begin{split}
F_{i,j}(t)
&=1-e^{-q(i)t},\quad t\le {t_{0}(i,j)}=0,\\
\lambda(i,j)
&=q(i),\\
r
&=1.
\end{split}
\end{align}
In particular, notice that $F_{i,j}(t)$ in \eqref{fij} depends only on $i$ for the case of a CTMC.

%%%%%%%%%%%%%%%%%%%%%%%%%%%%%%%%%%%%%%%%%%
\subsection{Single FPTs}

To describe the short-time distribution of the FPT $\tau$ in \eqref{tau} for this generalized process $X$, we must generalize our definitions in \eqref{path}-\eqref{Lambdathm}. First, let 
\begin{align*}
\Pi=\{\pi(i,j)\}_{i,j\in I}
\end{align*}
be the stochastic matrix governing the discrete-time process $Y(n)$ in \eqref{Y} \cite{norris1998}. In particular, $\pi(i,j)$ is the probability that $Y$ jumps from $i$ to $j$. Define a path $\PP$ of length $d\in\mathbb{Z}_{\ge0}$ from a state $i_{0}\in I$ to a state $i_{d}\in I$ to be a sequence of $d+1$ states in $I$,
\begin{align}\label{path2}
\PP%_{d}(i_{0},i_{d-1})
=(\PP(0),\dots,\PP(d))
=(i_{0},i_{1},\dots,i_{d})\in I^{d+1},
\end{align}
so that
\begin{align}\label{explain2}
\pi(\PP(k),\PP(k+1))
%=q(i_{k},i_{k+1})
>0,\quad\text{for }k\in\{0,1,\dots,d-1\}.
\end{align}
Note that \eqref{path2}-\eqref{explain2} generalizes the definition of a path in \eqref{path}-\eqref{explain} since if $X$ is a CTMC as in section~\ref{ctmc}, then \cite{norris1998}
\begin{align}\label{piqrela}
\begin{split}
\pi(i,j)
&=\begin{cases}
q(i,j)/q(i) & \text{if }j\neq i\text{ and }q(i)\neq0,\\
0 & \text{if }j\neq i\text{ and }q(i)=0,
\end{cases}\\
\pi(i,i)
&=\begin{cases}
0 & \text{if }q(i)\neq0,\\
1 & \text{if }q(i)=0.
\end{cases}
\end{split}
\end{align}
As in section~\ref{ctmc}, we assume of course that there is a path from the support of $\rho$ to the target (see \eqref{canfind}). We also assume \eqref{away}, which means the searcher cannot start directly on the target.

For a path $\PP\in I^{d+1}$, define $\lambda(\PP)$ to be the product of the $\pi(i,j)$'s and $\lambda(i,j)$'s in \eqref{short} along the path,
\begin{align}\label{lambda2}
\lambda(\PP)
:=\prod_{i=0}^{d-1}\pi(\PP(i),\PP(i+1))\lambda(\PP(i),\PP(i+1))>0.
\end{align}
Note that \eqref{fij} and \eqref{piqrela} imply that \eqref{lambda2} generalizes \eqref{lambda}. In addition, for a path $\PP\in I^{d+1}$, define $t_{0}(\PP)$ to be the sum of the $t_{0}(i,j)$'s in \eqref{short} along the path,
\begin{align*}
t_{0}(\PP)
:=\sum_{i=0}^{d-1}t_{0}(\PP(i),\PP(i+1))
\ge0.
\end{align*}
In words, $t_{0}(\PP)$ is the shortest possible time required to traverse the path $\PP$. Taking the infimum over paths, define the shortest possible time to reach a set $I_{1}\subset I$ starting from a set $I_{0}\subset I$,
\begin{align*}
%\amin(i,j)
%&=\inf\{t_{0}(\PP):\PP\in I^{d+1},\PP(0)=i,\PP(d)=j\}
%\ge0,\\
\amin(I_{0},I_{1})
:=\inf\{t_{0}(\PP):\PP\in I^{d+1},\PP(0)\in I_{0},\PP(d)\in I_{1}\}
\ge0.
\end{align*}
Next, define the smallest number of jumps required to reach $I_{1}$ from $I_{0}$ if the searcher traverses a path $\PP$ with the minimum required time $t_{0}(\PP)=\tmin(I_{0},I_{1})$,
\begin{align*}
\dmin(I_{0},I_{1})
:=\inf\{d:\PP(0)\in I_{0},\PP(d)\in I_{1},t_{0}(\PP)=\tmin(I_{0},I_{1})\}.
\end{align*}
Further, define 
\begin{align*}
\mathcal{S}(I_{0},I_{1})
:=\{\PP\in I^{d+1}:\PP(0)\in I_{0},\PP(d)\in I_{1},t_{0}(\PP)=\tmin(I_{0},I_{1}),d=\dmin(I_{0},I_{1})\}.
\end{align*}
In words, $\mathcal{S}(I_{0},I_{1})$ are the paths $\PP$ going from $I_{0}$ to $I_{1}$ which (i) have the minimum time and (ii) have the minimum number of jumps out of the paths which have the minimum time.
Define
\begin{align*}
\Lambda(\rho,I_{1})
:=\sum_{\PP\in\mathcal{S}(\textup{supp}(\rho),I_{1})}\rho(\PP(0))\lambda(\PP).
\end{align*}
It is immediate that these definitions generalize the definitions in section~\ref{ctmc} since $t_{0}(\PP)=0$ for every path in the case that $X$ is a CTMC.

%%%%%%%%%%%%%%%%%%%%%%%%%%
\begin{proposition}\label{cdf2}
We have that
\begin{align*}
%\P_{i_{0}}(\tau\le {\tmin}+t)
\P(\tau\le {\tmin}+t)
\sim\frac{(\Gamma(r+1))^{d}}{\Gamma(dr+1)}\Lambda t^{d}\quad\text{as }t\to0+,
\end{align*}
where $r>0$ is in \eqref{short} and
\begin{align*}
{\tmin}
&={\amin}(\textup{supp}(\rho),I_{{\t}})\ge0,\\
d
&=\dmin(\textup{supp}(\rho),I_{{\t}})\in\mathbb{Z}_{>0},\\
\Lambda
&=\Lambda(\rho,I_{{\t}})>0.
\end{align*}
\end{proposition}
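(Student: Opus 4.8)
The plan is to mirror the strategy underlying Proposition~\ref{cdf}: express $\P(\tau\le\tmin+t)$ as a sum over the first-passage paths of $X$, show that for small $t$ only the minimal-time, minimal-jump paths (those in $\mathcal{S}(\textup{supp}(\rho),I_{\t})$) contribute at leading order, and evaluate that contribution exactly. First I would decompose according to the embedded chain $Y$. For a ``first-hitting'' path $\PP$ of length $m$, meaning $\PP(0)\in\textup{supp}(\rho)$, $\PP(m)\in I_{\t}$, and $\PP(j)\notin I_{\t}$ for $j<m$, the Markov property of $Y$ gives $\P(Y(0)=\PP(0),\dots,Y(m)=\PP(m))=\rho(\PP(0))\prod_{k=0}^{m-1}\pi(\PP(k),\PP(k+1))$, and on this event $\tau=J(m)=\sum_{k=1}^{m}S(k)$. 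By the conditional independence assumption \eqref{both}, conditioned on this path the waiting times $S(1),\dots,S(m)$ are independent with $S(k)\sim F_{\PP(k-1),\PP(k)}$, so
\begin{align*}
\P(\tau\le\tmin+t)=\sum_{m}\sum_{\PP}\rho(\PP(0))\Big(\prod_{k=0}^{m-1}\pi(\PP(k),\PP(k+1))\Big)\,\P\Big(\textstyle\sum_{k=1}^{m}S(k)\le\tmin+t\Big),
\end{align*}
the inner probability being computed for independent $S(k)\sim F_{\PP(k-1),\PP(k)}$ and the inner sum ranging over first-hitting paths of length $m$.

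The heart of the proof is the short-time asymptotics of this inner probability along a path with $t_{0}(\PP)=\tmin$. Writing $S(k)=t_{0}(\PP(k-1),\PP(k))+W_{k}$ with $W_{k}\ge0$ and $\P(W_{k}\le s)\sim\lambda(\PP(k-1),\PP(k))s^{r}$ as $s\to0+$, we get $\sum_{k}S(k)-\tmin=\sum_{k}W_{k}$, so the object of interest is $\P(\sum_{k=1}^{m}W_{k}\le t)$. Each $W_{k}$ is regularly varying of index $r$ at the origin, and the key fact is that the $m$-fold convolution stays regularly varying: using the Beta integral $\int_{0}^{t}u^{ar-1}(t-u)^{r-1}\,\dd u=t^{(a+1)r-1}\Gamma(ar)\Gamma(r)/\Gamma((a+1)r)$ together with $r\Gamma(r)=\Gamma(r+1)$, an induction on $m$ (cleanest via the Laplace--Stieltjes transform and a Karamata Tauberian theorem, which avoids assuming densities) gives
\begin{align*}
\P\Big(\textstyle\sum_{k=1}^{m}W_{k}\le t\Big)\sim\Big(\prod_{k=1}^{m}\lambda(\PP(k-1),\PP(k))\Big)\frac{(\Gamma(r+1))^{m}}{\Gamma(mr+1)}\,t^{mr}\quad\text{as }t\to0+.
\end{align*}
For $m=1$ this is just $F(\tmin+t)\sim\lambda t^{r}$, which already shows the exponent is $mr$; for $r=1$ it reduces to $(\prod\lambda)\,t^{m}/m!$, recovering Proposition~\ref{cdf}. (This suggests the displayed power in the statement should read $t^{dr}$, which coincides with $t^{d}$ precisely in the CTMC case $r=1$.)

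Summing the displayed asymptotics over the finitely many $\PP\in\mathcal{S}(\textup{supp}(\rho),I_{\t})$ (each with $m=d$ and $t_{0}(\PP)=\tmin$) yields the leading term $\tfrac{(\Gamma(r+1))^{d}}{\Gamma(dr+1)}\Lambda\,t^{dr}$, since $\rho(\PP(0))\prod\pi\cdot\prod\lambda=\rho(\PP(0))\lambda(\PP)$ and $\Lambda=\sum_{\PP\in\mathcal{S}}\rho(\PP(0))\lambda(\PP)$. It remains to show every other first-hitting path is negligible, and this is where the main work lies. Any first-hitting path with $t_{0}(\PP)>\tmin$ satisfies $\sum_{k}S(k)\ge t_{0}(\PP)$ and hence contributes $0$ once $t<t_{0}(\PP)-\tmin$; since $I$ is finite there are only finitely many first-hitting paths of length $\le d$, so there is a uniform gap $\delta>0$ below which all non-$\mathcal{S}$ paths of length $\le d$ drop out (by definition of $\dmin$ no path of length $<d$ can attain $t_{0}=\tmin$, so these too have $t_{0}>\tmin$). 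The delicate case is the infinitely many long paths, $m>d$. Here I would use the elementary inclusion $\{\sum_{k=1}^{m}W_{k}\le t\}\subseteq\{W_{k}\le t\text{ for all }k\}$ together with a uniform bound $\P(W_{k}\le s)\le Cs^{r}$ for $s\le s_{0}$ (valid with $C=2\sup_{i,j}\lambda(i,j)<\infty$ by finiteness of $I$ and \eqref{short}) to get $\P(\sum_{k=1}^{m}W_{k}\le t)\le(Ct^{r})^{m}$. Bounding the number of paths of length $m$ by $|I|^{m+1}$ then gives
\begin{align*}
\sum_{m>d}\sum_{\PP\text{ of length }m}\rho(\PP(0))\Big(\prod\pi\Big)\P\Big(\textstyle\sum_{k=1}^{m}W_{k}\le t\Big)\le|I|\sum_{m>d}\big(|I|Ct^{r}\big)^{m}=O\big(t^{(d+1)r}\big)=o\big(t^{dr}\big),
\end{align*}
for $t$ small enough that $|I|Ct^{r}<1$. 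Combining the leading term with these two negligible pieces gives the result.

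The main obstacle is precisely this last step: controlling the aggregate contribution of arbitrarily long paths. The gap argument disposes of short detours one at a time, but long paths (e.g.\ those winding through zero-$t_{0}$ cycles before reaching the target) cannot be, and instead require the uniform tail bound and a convergent geometric majorant, which rely essentially on $|I|<\infty$ and on $\sup_{i,j}\lambda(i,j)<\infty$. A secondary technical point is making the convolution asymptotics rigorous at the level of distribution functions rather than densities, for which I would invoke the Karamata Tauberian theorem for the Laplace--Stieltjes transform: $\P(W\le s)\sim\lambda s^{r}$ is equivalent to $\widehat{F}(p)\sim\lambda\Gamma(r+1)p^{-r}$, the transform of the sum factorizes, and inverting recovers exactly the constant $(\Gamma(r+1))^{m}/\Gamma(mr+1)$.
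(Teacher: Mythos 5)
Your proposal is correct, and its skeleton matches the paper's proof of Proposition~\ref{cdf2}: condition on the path taken by the embedded chain, kill the paths with $t_{0}(\PP)>\tmin$ by a gap argument, and evaluate the minimal paths with a convolution asymptotic proved via Laplace--Stieltjes transforms and a Tauberian theorem (your convolution fact is precisely the paper's Lemma~\ref{lemmaexp}, proved there exactly as you propose). You are also right about the exponent: the paper's own proof yields $t^{dr}$, so the $t^{d}$ displayed in the statement is a typo, harmless only when $r=1$. The one genuine difference is your treatment of long paths, and there your argument is more careful than the published one. After discarding paths with $t_{0}(\PP)\neq\tmin$, the paper indexes the surviving first-hitting paths by $K$, computes the asymptotics only for the minimal-length ones $K'\subseteq K$, and dismisses the rest by asserting $|K|<\infty$ ``since $|I|<\infty$.'' That assertion fails in general: if the network has a cycle all of whose edges have $t_{0}=0$ --- which is exactly the situation in the embedded CTMC case, where $t_{0}(i,j)=0$ for every edge --- then there are infinitely many first-hitting paths with $t_{0}(\PP)=\tmin$, and the paper provides no uniform control of their aggregate contribution. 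Your geometric majorant does exactly this: conditional independence of the waiting times gives $\P\big(\sum_{k=1}^{m}W_{k}\le t\big)\le(Ct^{r})^{m}$ along any length-$m$ path, there are at most $|I|^{m+1}$ such paths, and summing over $m>d$ gives $O(t^{(d+1)r})=o(t^{dr})$ for small $t$. This is the analogue of the dominated-convergence step that the paper \emph{does} carry out in its proof of Proposition~\ref{cdf}, where the same infinitude of paths arises and is handled; so your route effectively repairs a weak point in the published argument at the cost of a slightly longer write-up, while the paper's route is shorter but rests on a finiteness claim that is false in natural examples.
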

%%%%%%%%%%%%%%%%%%%%%%%%%%

%%%%%%%%%%%%%%%%%%%%%%%%%%%%%%%%%%%%%%%%%%
\subsection{Extreme FPTs}

Having determined the short-time distribution of a single FPT $\tau$ in Proposition~\ref{cdf2}, we now determine the distribution and moments of the fastest FPT, $T_{N}$, and the $k$th fastest FPT, $T_{k,N}$, out of $N\gg1$ iid realizations of $\tau$.

%%%%%%%%%%%%%%%%%%%%%%%%%%
\begin{theorem}\label{main2}
Let $\tmin\ge0$, $d\ge1$, $r>0$, and $\Lambda>0$ be as in Proposition~\ref{cdf2} and define
\begin{align*}
A=\frac{(\Gamma(r+1))^{d}}{\Gamma(dr+1)}\Lambda>0.
\end{align*}
The following rescaling of $T_{N}-{\tmin}$ converges in distribution to a Weibull random variable,
\begin{align*}%\label{cd2}
(AN)^{1/d}(T_{N}-{\tmin})
%=\frac{T_{N}}{a_{N}}
\to_{\dist}
\textup{Weibull}(1,d)\quad\text{as }N\to\infty.
%\quad\text{where }a_{N}
%:=\Big(\frac{1}{{{A}} N}\Big)^{1/d}.
\end{align*}
Suppose further that
\begin{align*}
\E[T_{N}]<\infty\quad\text{for some }N\ge1.
\end{align*}
Then for each moment $m\in(0,\infty)$, we have that
\begin{align*}%\label{momentformula}
\E[(T_{N}-{\tmin})^{m}]
%&= (a_{N})^{m}\Gamma(1+m/d)+o\big((a_{N})^{m}\big)\quad\text{as }\\
%&= (AN)^{-m/d}\Big(\Gamma(1+m/d)+o(1)\Big)\quad\text{as }N\to\infty,
&\sim \frac{\Gamma(1+m/d)}{(AN)^{m/d}}\quad\text{as }N\to\infty.
\end{align*}
\end{theorem}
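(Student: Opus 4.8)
The plan is to base everything on the identity $\P(T_N>t)=(\P(\tau>t))^N$ together with the short-time expansion of Proposition~\ref{cdf2}, mirroring the proof of Theorem~\ref{main} but now centered at the shift $\tmin$. Write $G(t):=\P(\tau>t)$. Note first that \eqref{aij} forces each waiting time to exceed its $t_0(i,j)$, so any path to $I_{\t}$ takes at least time $\tmin$ and hence $\tau\ge\tmin$ almost surely; since $\P(\tau\le\tmin+s)\sim As^{d}\to0$ we in fact have $\tau>\tmin$ almost surely, so $T_N-\tmin\ge0$. For the convergence in distribution, fix $z\ge0$ and substitute $t=\tmin+z(AN)^{-1/d}$ into $\P(T_N>t)=(G(t))^{N}$. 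Because $z(AN)^{-1/d}\to0$, Proposition~\ref{cdf2} gives $\P(\tau\le\tmin+z(AN)^{-1/d})=A\big(z(AN)^{-1/d}\big)^{d}(1+o(1))=\tfrac{z^{d}}{N}(1+o(1))$, so
\[
\P\big((AN)^{1/d}(T_N-\tmin)>z\big)=\Big(1-\tfrac{z^{d}}{N}(1+o(1))\Big)^{N}\longrightarrow e^{-z^{d}},
\]
which is the $\mathrm{Weibull}(1,d)$ survival function \eqref{zweibull}; as that law is continuous this establishes the stated convergence in distribution.

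For the moments, set $Z_N:=(AN)^{1/d}(T_N-\tmin)$, so that $(AN)^{m/d}\E[(T_N-\tmin)^{m}]=\E[Z_N^{m}]=\int_0^\infty m z^{m-1}\P(Z_N>z)\,\dd z$ and the target value is $\int_0^\infty m z^{m-1}e^{-z^{d}}\,\dd z=\Gamma(1+m/d)$. I would obtain $\E[Z_N^{m}]\to\Gamma(1+m/d)$ from the pointwise limit above by a controlled-convergence argument, splitting the integral at $z=\eta(AN)^{1/d}$ for a fixed small $\eta>0$. On the near range $z\le\eta(AN)^{1/d}$ I would manufacture an $N$-uniform dominating function: choosing $\eta$ so small that Proposition~\ref{cdf2} gives $\P(\tau\le\tmin+s)\ge\tfrac{A}{2}s^{d}$ for $0\le s\le\eta$, the bound $1-x\le e^{-x}$ yields $G(\tmin+s)\le e^{-(A/2)s^{d}}$, whence with $s=z(AN)^{-1/d}\le\eta$,
\[
\P(Z_N>z)=(G(\tmin+s))^{N}\le e^{-(A/2)Ns^{d}}=e^{-z^{d}/2}.
\]
Since $m z^{m-1}e^{-z^{d}/2}$ is integrable and $\P(Z_N>z)\to e^{-z^{d}}$ pointwise, dominated convergence makes the near-range contribution tend to $\Gamma(1+m/d)$.

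The hard part is the far tail $z>\eta(AN)^{1/d}$, where the short-time expansion is useless and I must use the hypothesis that $\E[T_{N_1}]<\infty$ for some $N_1$, i.e.\ $\int_0^\infty (G(t))^{N_1}\,\dd t<\infty$. Reverting to $t=\tmin+z(AN)^{-1/d}\ge\tmin+\eta=:t_1$, the far-tail contribution is $(AN)^{m/d}\int_{t_1}^\infty m(t-\tmin)^{m-1}(G(t))^{N}\,\dd t$. On $[t_1,\infty)$ monotonicity gives $G(t)\le G(t_1)=:\gamma<1$ (strict because $\P(\tau\le t_1)>0$), while integrability of $(G)^{N_1}$ together with monotonicity forces $G(t)=o(t^{-1/N_1})$ and hence a polynomial bound $G(t)\le C\,t^{-1/N_1}$ for large $t$. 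Factoring $(G)^{N}\le\gamma^{N/2}(G)^{N/2}$ and taking $N>2mN_1$ makes $\int_{t_1}^\infty t^{m-1}(G)^{N/2}\,\dd t$ finite, and bounded uniformly in $N$ since on $[t_1,\infty)$ the integrand decreases in $N$; thus the whole far-tail contribution is at most $C_m\,(AN)^{m/d}\gamma^{N/2}\to0$, the exponential decay in $N$ overwhelming the polynomial prefactor. Summing the two ranges gives $\E[Z_N^{m}]\to\Gamma(1+m/d)$, i.e.\ the claimed moment asymptotics. The single genuinely delicate point is this bootstrap, turning the assumption of one finite first moment into uniform control of every moment $m\in(0,\infty)$ for large $N$; the rest is a direct transcription of the exponential-waiting-time argument behind Theorem~\ref{main}, re-centered at $\tmin$.
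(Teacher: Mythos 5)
Your proposal is correct, but it takes a genuinely different route from the paper: the paper proves Theorem~\ref{main2} in a single line, by citing Proposition~\ref{cdf2} together with Theorems~2 and 3 of \cite{lawley2020comp}, which are general extreme-value results stating exactly that a short-time law $\P(\tau\le\tmin+t)\sim At^{d}$, $t\to0+$, plus finiteness of $\E[T_{N}]$ for one $N$, forces Weibull convergence of the rescaled minimum together with all of its moments (the same citation pattern disposes of Theorems~\ref{main}, \ref{kth}, and \ref{kth2}, the latter two via Theorems~5 and 6 of that reference). What you have written is, in effect, a self-contained re-derivation of those cited theorems. Writing $G(t):=\P(\tau>t)$, your distributional limit is the standard computation $(1-z^{d}(1+o(1))/N)^{N}\to e^{-z^{d}}$; the substance is the moment convergence, and your two-range split is sound: on the near range, $1-x\le e^{-x}$ gives the $N$-uniform integrable envelope $mz^{m-1}e^{-z^{d}/2}$, while on the far tail the hypothesis $\E[T_{N_{1}}]<\infty$, i.e.\ $\int_{0}^{\infty}(G(t))^{N_{1}}\,\dd t<\infty$, combined with monotonicity of $G$ yields $G(t)\le Ct^{-1/N_{1}}$ for large $t$, so that factoring $(G)^{N}\le\gamma^{N/2}(G)^{N/2}$ with $\gamma:=G(\tmin+\eta)<1$ makes the far-tail contribution $O\big((AN)^{m/d}\gamma^{N/2}\big)\to0$; this bootstrap is precisely the role the one-finite-moment hypothesis plays in the cited theorems. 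The trade-off is clear: the paper's citation buys brevity and reusability across all four extreme-FPT theorems, while your argument buys self-containedness and makes explicit where each hypothesis enters. Only two cosmetic repairs are needed: for $m<1$ the bound $(t-\tmin)^{m-1}\le t^{m-1}$ reverses, so on $[\tmin+\eta,\infty)$ use $(t-\tmin)^{m-1}\le(\eta/(\tmin+\eta))^{m-1}t^{m-1}$ instead; and the polynomial tail bound on $G$ holds only for $t$ beyond some $t_{2}$, so the compact piece $[\tmin+\eta,t_{2}]$ should be absorbed using $G\le\gamma$ and $t^{m-1}$ bounded there before invoking integrability at infinity.
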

%%%%%%%%%%%%%%%%%%%%%%%%%%

%%%%%%%%%%%%%%%%%%%%%%%%%%%%%%%%%%%%
\begin{theorem}\label{kth2}
Fix $k\ge1$ and let $A$ be as in Theorem~\ref{main2}. The following rescaling of $T_{k,N}-\tmin$ converges in distribution to a generalized Gamma random variable,
\begin{align*}
(AN)^{1/d}(T_{k,N}-\tmin)
%=\frac{T_{k,N}}{a_{N}}
\to_{\dist}
\textup{gen}\Gamma(1,d,k)\quad\text{as }N\to\infty.
%\quad\text{where }a_{N}
%:=\Big(\frac{1}{{{A}} N}\Big)^{1/d}.
\end{align*}
Suppose further that
\begin{align*}
\E[T_{N}]<\infty\quad\text{for some }N\ge1.
\end{align*}
Then for each moment $m\in(0,\infty)$, we have that
\begin{align*}
\E[(T_{k,N}-\tmin)^{m}]
&\sim\frac{\Gamma(k+m/d)/\Gamma(k)}{(AN)^{m/d}}
\quad\text{as }N\to\infty.
\end{align*}
\end{theorem}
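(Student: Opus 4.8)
The plan is to center the first passage times and thereby reduce to the structure already treated in Theorem~\ref{kth}. Set $\tilde\tau_i:=\tau_i-\tmin$ for $i=1,\dots,N$. Since $\tmin=\amin(\textup{supp}(\rho),I_{\t})$ is the shortest possible time to reach the target, each $\tilde\tau_i\ge0$ almost surely, the $\tilde\tau_i$ are iid, and Proposition~\ref{cdf2} gives the short-time law $\P(\tilde\tau_1\le t)\sim A t^{d}$ as $t\to0+$ with $A=\frac{(\Gamma(r+1))^{d}}{\Gamma(dr+1)}\Lambda$. As $T_{k,N}-\tmin$ is exactly the $k$th smallest of $\tilde\tau_1,\dots,\tilde\tau_N$, the problem is formally identical to the one in Theorem~\ref{kth} with $\tilde\tau$ replacing $\tau$ and the coefficient $A$ replacing $\Lambda/d!$; I would therefore run the same two-step argument (distribution, then moments).

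For the distributional statement, write $F(t):=\P(\tau_1\le t)$ and observe that the count $\#\{i:\tau_i\le t\}$ is Binomial$(N,F(t))$, so that
\begin{align*}
g_N(z):=\P\big((AN)^{1/d}(T_{k,N}-\tmin)>z\big)=\P\big(\textup{Binomial}(N,F(\tmin+z(AN)^{-1/d}))\le k-1\big).
\end{align*}
For fixed $z\ge0$, Proposition~\ref{cdf2} gives $N F(\tmin+z(AN)^{-1/d})\to z^{d}$, so the Poisson limit theorem yields $g_N(z)\to\P(\textup{Poisson}(z^{d})\le k-1)$. For integer $k$ the identity $\sum_{j=0}^{k-1}\mu^{j}e^{-\mu}/j!=\Gamma(k,\mu)/\Gamma(k)$ identifies this limit as $\Gamma(k,z^{d})/\Gamma(k)=\P(\textup{gen}\Gamma(1,d,k)>z)$, which is the asserted convergence in distribution.

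For the moments I would start from the layer-cake identity $(AN)^{m/d}\E[(T_{k,N}-\tmin)^{m}]=\int_0^\infty m z^{m-1}g_N(z)\,\dd z$ and justify passing the limit $N\to\infty$ inside. The integrand converges pointwise to $m z^{m-1}\P(Z>z)$ with $Z=_{\textup{d}}\textup{gen}\Gamma(1,d,k)$, and $\int_0^\infty m z^{m-1}\P(Z>z)\,\dd z=\E[Z^{m}]=\Gamma(k+m/d)/\Gamma(k)$, so it remains to control the integral uniformly in $N$. I would split the range at $z=\eps(AN)^{1/d}$. On the moderate range $z\le\eps(AN)^{1/d}$ (where $t=\tmin+z(AN)^{-1/d}\le\eps$ lies in the short-time regime), the two-sided bound $(1-\eps)z^{d}/N\le F(\tmin+z(AN)^{-1/d})\le(1+\eps)z^{d}/N$ from Proposition~\ref{cdf2}, combined with $\binom{N}{j}p^{j}(1-p)^{N-j}\le\frac{(Np)^{j}}{j!}e^{-p(N-k+1)}$, yields the $N$-independent integrable domination $g_N(z)\le e^{-(1-2\eps)z^{d}}\sum_{j=0}^{k-1}((1+\eps)z^{d})^{j}/j!$ for all large $N$, and dominated convergence then handles this range.

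The main obstacle is the far range $z>\eps(AN)^{1/d}$, equivalently $\tau$-values bounded away from $\tmin$, where the short-time asymptotics are useless and the global hypothesis $\E[T_{N_0}]<\infty$ (for some fixed $N_0$) must be invoked. Writing $G:=1-F$, I would combine the order-statistic tail bound $\P(T_{k,N}>t)\le\frac{k}{(k-1)!}N^{k-1}G(t)^{N-k+1}$ with the consequence of $\int_0^\infty G(t)^{N_0}\,\dd t<\infty$ that $G(t)=o(t^{-1/N_0})$, so that $\int_\eps^\infty s^{m-1}G(\tmin+s)^{L}\,\dd s<\infty$ for $L:=mN_0+1$. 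Peeling off this fixed power and bounding the remaining factor by $\beta^{N-k+1-L}$, where $\beta:=G(\tmin+\eps)<1$, shows the far-range contribution is at most a constant times $(AN)^{m/d}N^{k-1}\beta^{N}$, which vanishes because geometric decay beats the polynomial prefactor. Adding the two ranges gives $(AN)^{m/d}\E[(T_{k,N}-\tmin)^{m}]\to\Gamma(k+m/d)/\Gamma(k)$, as claimed. The delicate point is to choose $\eps$ and the peeled power $L$ so that the short-time domination and the global tail estimate meet cleanly at the interface $z\asymp(AN)^{1/d}$.
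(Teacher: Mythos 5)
Your proof is correct, but it takes a genuinely different route from the paper's. The paper disposes of Theorem~\ref{kth2} in one line: it combines Proposition~\ref{cdf2} with Theorems~5 and 6 of \cite{lawley2020comp}, which are general extreme-value results stating precisely that if $\P(\tau\le\tmin+t)\sim At^{d}$ as $t\to0+$ (plus the hypothesis $\E[T_{N}]<\infty$ for some $N$), then the rescaled, centered $k$th order statistic converges to $\textup{gen}\Gamma(1,d,k)$ and all its moments converge. What you have done is re-prove the content of those cited theorems from scratch. Your steps all check out: the centering $\tilde\tau_{i}=\tau_{i}-\tmin\ge0$ is legitimate since every path to the target takes time at least $\tmin$; the Poisson limit for the binomial count $\#\{i:\tau_{i}\le\tmin+z(AN)^{-1/d}\}$, together with the identity $\sum_{j=0}^{k-1}\mu^{j}e^{-\mu}/j!=\Gamma(k,\mu)/\Gamma(k)$, gives the distributional claim; and for the moments, the domination $g_{N}(z)\le e^{-(1-2\eps)z^{d}}\sum_{j=0}^{k-1}((1+\eps)z^{d})^{j}/j!$ on the near range plus the far-range bound $\P(T_{k,N}>\tmin+s)\le C_{k}N^{k-1}G(\tmin+s)^{L}\beta^{N-k+1-L}$ (with $\beta=G(\tmin+\eps)<1$, which is indeed strictly less than $1$ because Proposition~\ref{cdf2} forces $F(\tmin+\eps)>0$, and with $G(t)=o(t^{-1/N_{0}})$ following from monotonicity of $G$ and $\int_{0}^{\infty}G^{N_{0}}\,\dd t<\infty$) correctly shows the far range is $O((AN)^{m/d}N^{k-1}\beta^{N})\to0$. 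The paper's approach buys brevity and modularity — the same two external theorems also dispatch Theorems~\ref{main}, \ref{kth}, and \ref{main2} — while yours buys self-containedness at the price of reproducing that machinery. Two minor points: the two-sided bound from Proposition~\ref{cdf2} holds for $t-\tmin\le\delta$ with $\delta=\delta(\eps)$ supplied by the asymptotic relation, so the split should occur at $z=\delta(AN)^{1/d}$ rather than at $\eps(AN)^{1/d}$; and the ``interface'' you flag as delicate is not actually a difficulty, since the two estimates are applied on disjoint ranges and never need to match — only the splitting threshold must be small enough that both apply on their respective sides.
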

%%%%%%%%%%%%%%%%%%%%%%%%%%

%%%%%%%%%%%%%%%%%%%%%%%%%%%%%%%%%%%%%%%%%%%%%%%%%%%%%%%%%%%%%%%%%%%%%%%%%%%%%%%%%%%%%%%%%%%%%%%%%%%%%%%%%%%%%%%%%%%%%%%%%%%%%%%%%%%%%%%%%%%%%%%%%%%%%%%%%%%%%%%%%%%%%%%%%%%%%%%%%%%%%%%%%%%%%%%%%%%%%%%%%%%%%%%%%%%%%%%%%%%%%%%%
\section{\label{numerics}Numerical simulations}

In this section, we compare the results of our analysis to numerical simulations on complex networks. We consider the setup of section~\ref{ctmc} in which each searcher moves according to a CTMC.

To create the CTMC, we create a graph by randomly connecting $V=|I|\gg1$ vertices by $E$ directed edges (we construct the graph so that $E\approx5V$). We then assign jump rates to each directed edge independently according to a uniform distribution. More precisely, if the CTMC has infinitesimal generator matrix $Q=\{q(i,j)\}_{i,j\in I}$, then the diagonal entries, $q(i,i)\le0$ are chosen so that $Q$ has zero row sums (see \eqref{od2}), and the off-diagonal entries, $q(i,j)\ge0$ with $i\neq j$, are
\begin{align*}
q(i,j)
=\begin{cases}
U_{i,j} & \text{if there is a directed edge from $i$ to $j$},\\
0 & \text{otherwise},
\end{cases}
\end{align*}
where $\{U_{i,j}\}_{i,j\in I}$ are independent uniform random variables on $[0,1]$.

To numerically compute the distribution and mean of the fastest FPT $T_{N}$, we need only compute the survival probability $\P(\tau>t)$ of a single FPT $\tau$ since
\begin{align}
\P(T_{N}>t)
&=\big(\P(\tau>t)\big)^{N},\label{calc1}\\
\E[T_{N}]
%&=\int_{0}^{\infty}\P(T_{N}>t)\,\dd t
&=\int_{0}^{\infty}\big(\P(\tau>t)\big)^{N}\,\dd t.\label{calc2}
\end{align}
To compute $\P(\tau>t)$, let the target be a single node, $I_{\t}=i_{\t}\in I$, and let $\widetilde{Q}$ denote the matrix obtained by deleting the row and column in $Q$ corresponding to $i_{\t}$. Similarly, for an initial distribution $\rho$, let $\widetilde{\rho}$ denote the vector obtained by deleting the entry in $\rho$ corresponding to $i_{\t}$. Then, $\P(\tau>t)$ is given by the sum of the entries in the vector $e^{\widetilde{Q}^{\top}t}\widetilde{\rho}$, where $\widetilde{Q}^{\top}$ denotes the transpose of $\widetilde{Q}$ and $e^{\widetilde{Q}^{\top}t}$ denotes the matrix exponential \cite{lawley2019imp}. In particular, we can write $\P(\tau>t)$ as the dot product,
\begin{align}\label{dot}
\P(\tau>t)
 =\mathbf{1}\cdot e^{\widetilde{Q}^{\top}t}\widetilde{\rho},
\end{align}
where $\mathbf{1}\in\R^{V-1}$ is the vector of all ones.

%%%%%%%%%%%%%%%%%%%%%
\begin{figure}[t]%[htp]
\centering
\includegraphics[width=.6\linewidth]{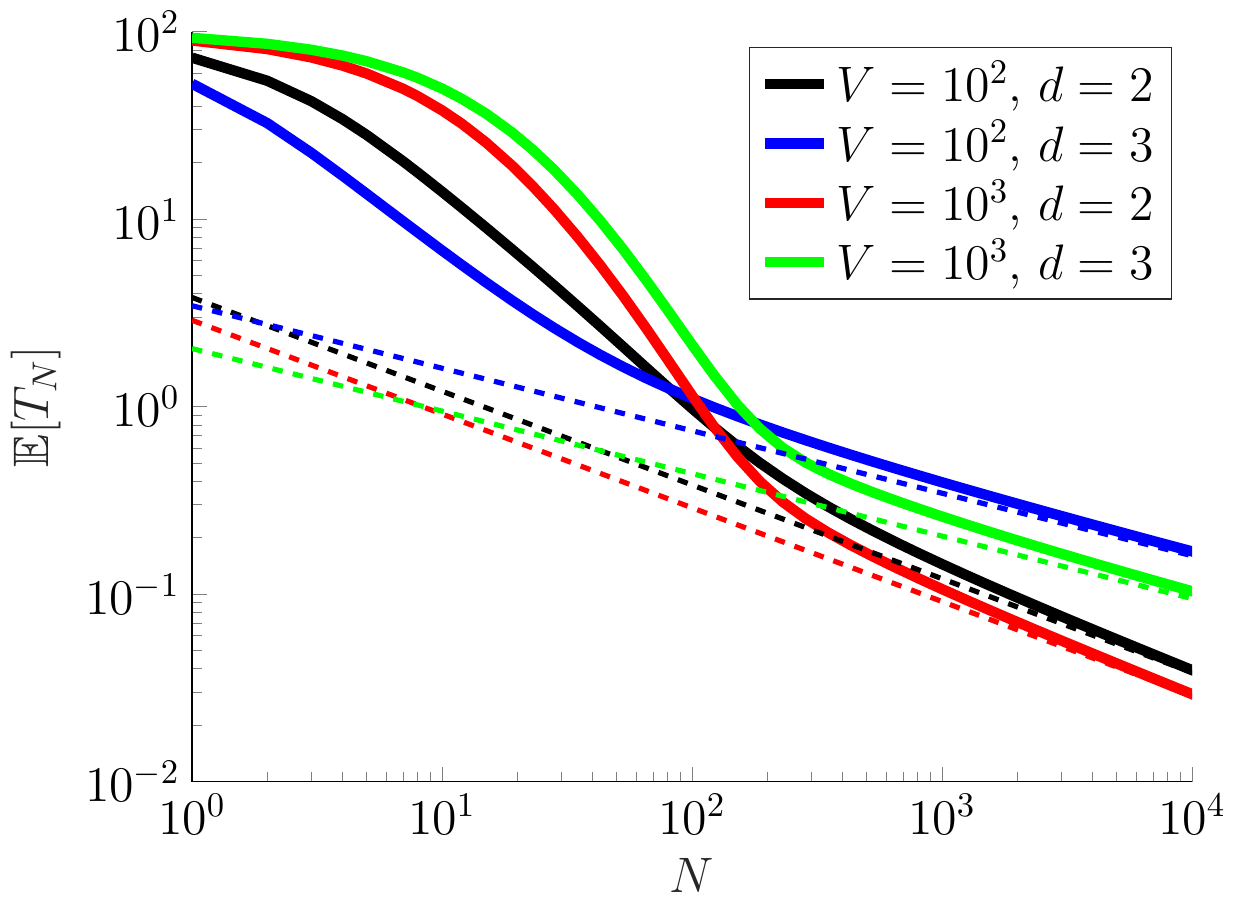}
\caption{\small Mean fastest FPT, $\E[T_{N}]$, as a function of the number of searchers, $N$. The different colored solid curves are $\E[T_{N}]$ computed numerically for different random graphs (different CTMCs). These curves approach the theoretical values (see Theorem~\ref{main} and \eqref{theory}) which are the dashed lines.}
\label{figmfpt}
\end{figure}
%%%%%%%%%%%%%%%%%%%%%

In Figure~\ref{figmfpt}, we plot the mean fastest FPT, $\E[T_{N}]$, as a function of the number of searchers, $N$, for different values of the number of vertices $V$ and the shortest distance $d$ from the starting location to the target state. The solid curves are $\E[T_{N}]$ computed from \eqref{calc2}, with $\P(\tau>t)$ computed from \eqref{dot}. The dashed lines are the large $N$ formula for $\E[T_{N}]$ found in Theorem~\ref{main}, namely
\begin{align}\label{theory}
\frac{\Gamma(1+1/d)}{(AN)^{1/d}}.
\end{align}

In agreement with the theory, the solid curves in Figure~\ref{figmfpt} approach the corresponding dashed lines as $N$ increases. In particular, this plot illustrates that the MFPT of a single searcher ($\E[T_{1}]=\E[\tau]\approx10^{2}$) is much slower than the MFPT of the fastest searcher out of many searchers ($\E[T_{N}]\ll \E[\tau]$ if $N\gg10^{2}$).

%%%%%%%%%%%%%%%%%%%%%%%%%%%%%%%%%%%%%%%%%%

%%%%%%%%%%%%%%%%%%%%%
\begin{figure}[t]%[htp]
\centering
\includegraphics[width=.6\linewidth]{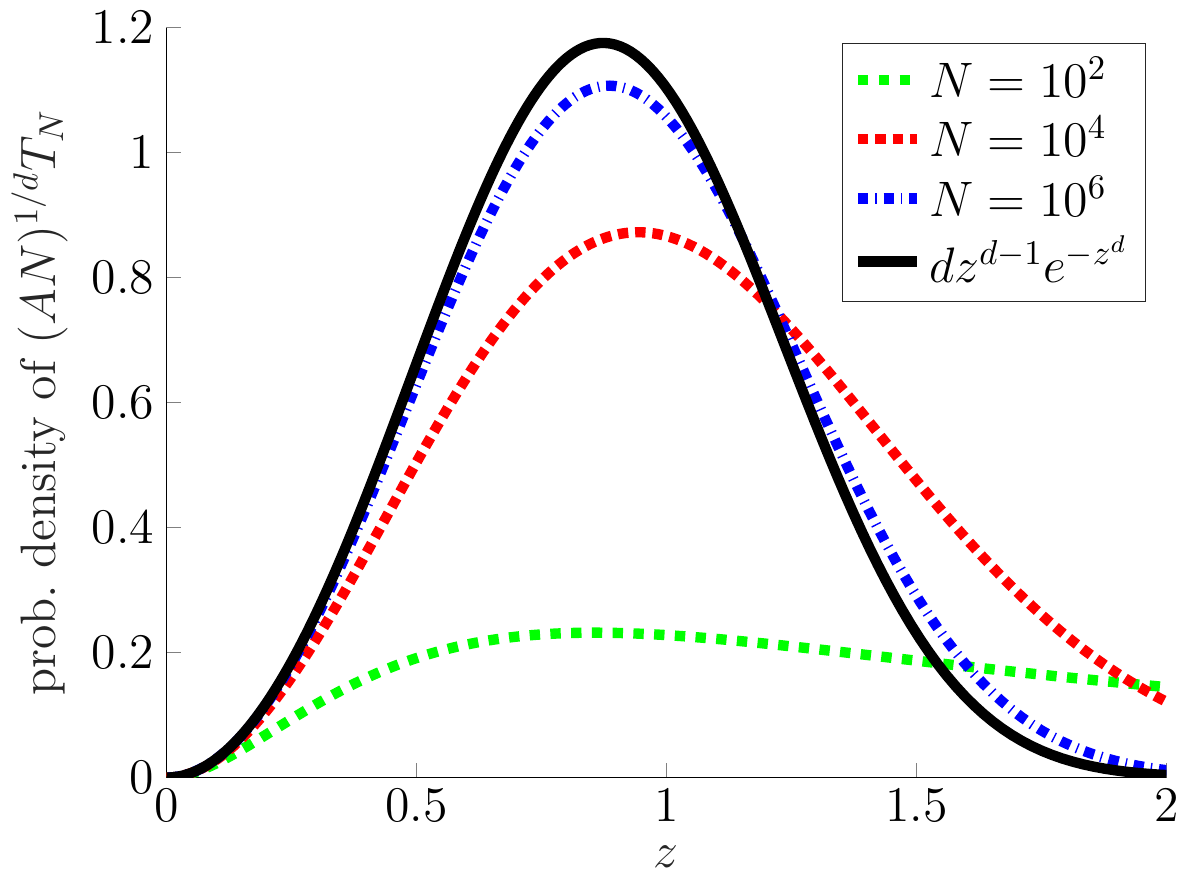}
\caption{\small Probability density of the rescaled fastest FPT, $(AN)^{1/d}T_{N}$, for different values of the number of searchers, $N$. The black solid curve is the probability density of a Weibull random variable with unit scale parameter and shape parameter $d$. In this plot, the random graph has $V=10^{3}$ vertices (states in the CTMC) and the shortest distance from the starting state to the target is $d=3$.}
\label{figdist}
\end{figure}
%%%%%%%%%%%%%%%%%%%%%

In addition to the moments of $T_{N}$, Theorem~\ref{main} gives the full probability distribution of $T_{N}$ for large $N$. We illustrate this convergence in Figure~\ref{figdist} by plotting the probability density of the rescaled fastest FPT, $(AN)^{1/d}T_{N}$, for different values of $N$. The probability density of $(AN)^{1/d}T_{N}$ is computed from \eqref{calc1}. In this plot, the graph has $V=10^{3}$ vertices (states for the Markov chain) and the shortest distance from the starting location to the target is $d=3$. In agreement with the theory, the probability density of $(AN)^{1/d}T_{N}$ approaches the density of a Weibull random variable with unit scale parameter and shape parameter $d$ (namely, the limiting density is $dz^{d-1}e^{-z^{d}}$).

%%%%%%%%%%%%%%%%%%%%%%%%%%%%%%%%%%%%%%%%%%%%%%%%%%%%%%%%%%%%%%%%%%%%%%%%%%%%%%%%%%%%%%%%%%%%%%%%%%%%%%%%%%%%%%%%%%%%%%%%%%%%%%%%%%%%%%%%%%%%%%%%%%%%%%%%%%%%%%%%%%%%%%%%%%%%%%%%%%%%%%%%%%%%%%%%%%%%%%%%%%%%%%%%%%%%%%%%%%%%%%%%
\section{\label{diffusion}(Sub)Diffusive searchers}
%%%%%%%%%%%%%%%%%%%%%%%%%%%%%%%%%%%%%%%%%%%%%%%%%%
\subsection{Diffusion}

In this subsection, we compare our results to extreme FPTs for continuous state space diffusion processes, which have been studied extensively \cite{weiss1983, yuste2000, yuste2001, redner2014, meerson2015, ro2017, godec2016x, hartich2018, hartich2019, basnayake2019, lawley2020esp, lawley2020uni, lawley2020dist, lawley2020sub}. Let $X^{\diff}=\{X^{\diff}(t)\}_{t\ge0}$ be a one-dimensional continuous state space diffusion process starting at the origin with diffusivity $D>0$. That is, suppose $X^{\diff}$ satisfies the stochastic differential equation,
\begin{align*}
\dd X^{\diff}
&=\sqrt{2D}\,\dd W,\quad X(0)=0,
\end{align*}
where $W=\{W(t)\}_{t\ge0}$ is a standard Brownian motion. Hence, the probability density $p(x,t)$ that $X(t)=x$ satisfies the Fokker-Planck equation,
\begin{align}
\frac{\partial}{\partial t}p
&=D\frac{\partial^{2}}{\partial x^{2}}p,\quad x\in\R,\,t>0,\label{fpe}\\
p(x,0)
&=\delta(x).\nonumber
\end{align}

Let $\tau^{\diff}$ be the first time that $X^{\diff}$ escapes the interval $(-L,L)$,
\begin{align}\label{td}
\tau^{\diff}
:=\inf\{t>0:X^{\diff}(t)\notin(-L,L)\},
\end{align}
and define the extreme FPT,
\begin{align}\label{tnd}
T_{N}^{\diff}
:=\min\{\tau_{1}^{\diff},\dots,\tau_{N}^{\diff}\},
\end{align}
where $\tau_{1}^{\diff},\dots,\tau_{N}^{\diff}$ are $N$ iid realizations of $\tau^{\diff}$. It is well-known that \cite{weiss1983}
\begin{align}\label{Tdiff}
\E[T_{N}^{\diff}]
\sim\frac{L^{2}}{4D\ln N},\quad\text{as }N\to\infty.
\end{align}
Indeed, the asymptotic behavior in \eqref{Tdiff} holds in much greater generality, including for diffusion processes on $d$-dimensional manifolds with space-dependent diffusivities and force fields \cite{lawley2020uni}.

It is interesting to compare the behavior in \eqref{Tdiff} for diffusive searchers to the behavior we found in Theorem~\ref{main} for CTMCs. Suppose we discretize space with step size
\begin{align}\label{dx}
\Delta x
=\frac{L}{d}
\ll L,
\end{align}
where $d$ is a large natural number. Let $\{X^{\ctmc}(t)\}_{t\ge0}$ be the CTMC which takes values in the one-dimensional network,
\begin{align}\label{1d}
I
:=\{i\Delta x\}_{i\in\mathbb{Z}}
=\{\dots,-\Delta x,0,\Delta x,\dots\},
\end{align}
and has jump rates (using the notation of section~\ref{ctmc}),
\begin{align*}
q(i,i\pm1)
=\frac{q(i)}{2}
&=\frac{D}{(\Delta x)^{2}}
=\frac{D}{L^{2}}d^{2}>0,\quad i\in\mathbb{Z}.
\end{align*}
Assume $X^{\ctmc}(0)=0$. Depending on the context, $X^{\diff}$ may be viewed as an approximation of $X^{\ctmc}$, or vice versa. The correspondence between $X^{\diff}$ and $X^{\ctmc}$ is perhaps most easily seen by noticing that if we use a centered, second order finite difference approximation for the spatial derivative in the Fokker-Planck equation \eqref{fpe} for $X^{\diff}$, then we obtain the master equation (Kolmogorov forward equation) for $X^{\ctmc}$.

Define $\tau^{\ctmc}$ and $T_{N}^{\ctmc}$ analogously to $\tau^{\diff}$ and $T_{N}^{\diff}$,
\begin{align}\label{tndc}
\begin{split}
\tau^{\ctmc}
&:=\inf\{t>0:X^{\ctmc}(t)\notin \{i\Delta x\}_{|i|<d}\},\\
T_{N}^{\ctmc}
&:=\min\{\tau_{1}^{\ctmc},\dots,\tau_{N}^{\ctmc}\},
\end{split}
\end{align}
where $\tau_{1}^{\ctmc},\dots,\tau_{N}^{\ctmc}$ are $N$ iid realization of $\tau^{\ctmc}$. 
Theorem~\ref{main} above implies that
\begin{align}\label{Tctmc}
\E[T_{N}^{\ctmc}]
\sim \frac{L^{2}}{D}\frac{f(d)}{N^{1/d}},\quad\text{as }N\to\infty,
\end{align}
where
\begin{align*}
f(d)
:=\frac{(d!)^{1/d}\Gamma(1+1/d)}{d^{2}}
\sim\frac{1}{de},\quad\text{as }d\to\infty.
\end{align*}
Hence, while the distributions of $X^{\diff}(t)$ and $X^{\ctmc}(t)$ can be made close for any fixed $t\ge0$ by taking $d$ large, we see from \eqref{Tdiff} and \eqref{Tctmc} that the extreme FPTs of $X^{\diff}$ and $X^{\ctmc}$ are quite different for any $d\ge1$.

Put another way, this shows that the diffusion limit, $d\to\infty$, and the many searcher limit, $N\to\infty$, of $T_{N}^{\ctmc}$ do not commute. From a modeling perspective, this means that care must be taken in choosing a model of diffusion (spatially continuous $X^{\diff}$ versus spatially discrete $X^{\ctmc}$) if the system depends on extreme statistics. See \cite{lawley2019pdmp} for an analysis of extreme statistics of diffusion modeled by a piecewise deterministic Markov process (i.e.\ a velocity jump process).

%%%%%%%%%%%%%%%%%%%%%%%%%%%%%%%%%%%%%%%%%%%%%%%%%%
\subsection{Subdiffusion}

In this subsection, we compare our results to extreme FPTs for subdiffusive processes. A subdiffusive process $\{X(t)\}_{t\ge0}$ is defined by a mean-squared displacement that grows sublinearly in time \cite{sokolov2012},
\begin{align*}
\E\Big[\big(X(t)-X(0)\big)^{2}\Big]
\propto t^{\alpha},\quad \alpha\in(0,1).
\end{align*}
A common model for subdiffusion is a certain type of CTRW \cite{metzler2000}. In one space dimension, this model is characterized by a jump length probability density function (pdf), $l(x)$, and a waiting time pdf, $w(t)$. In particular, if the searcher lands at some position $Y(n)\in\R$, the searcher waits until a time chosen from $w(t)$, then jumps to a new location $Y(n+1)=Y(n)+\xi(n+1)$, where $\xi(n+1)$ is chosen from $l(x)$. The searcher continues this process indefinitely.

Assume that the jump length pdf $l(x)$ is symmetric about the origin so that the walk is unbiased, and assume that it has finite standard deviation,
\begin{align*}
\Delta x
:=\sqrt{\int_{-\infty}^{\infty}x^{2}l(x)\,\dd x}<\infty.
\end{align*}
In addition, assume that the waiting time pdf has a slow power-law decay,
\begin{align}\label{tail}
w(t)
\sim C_{\alpha}\Big(\frac{\Delta t}{t}\Big)^{1+\alpha},\quad\text{as }t\to\infty,
\end{align}
where $\alpha\in(0,1)$, for some timescale $\Delta t$ and some rate $C_{\alpha}>0$. Choose $l(x)$ so that $\Delta x=L/d$ as in \eqref{dx} and choose $w(t)$ so that $\Delta t$ satisfies
\begin{align}\label{dt44}
(\Delta t)^{\alpha}
=\frac{(\Delta x)^{2}}{2K_{\alpha}}
=\frac{(L/d)^{2}}{2K_{\alpha}},
\end{align}
where $K_{\alpha}>0$ is some fixed generalized diffusivity. Then, in the diffusion limit $d\to\infty$, it is well-known that the pdf of the limiting process satisfies the fractional Fokker-Planck equation \cite{metzler2000},
\begin{align}\label{fractional}
\frac{\partial}{\partial t}p
=\D K_{\alpha}\frac{\partial^{2}}{\partial x^{2}}p,\quad x\in\R,\,t>0,
\end{align}
where $\D$ is the fractional derivative of Riemann-Liouville type \cite{samko1993}, defined by
\begin{align*}
\D f(t)
=\frac{1}{\Gamma(\alpha)}\frac{\dd}{\dd t}\int_{0}^{t}\frac{f(s)}{(t-s)^{1-\alpha}}\,\dd s.
\end{align*}

Let $X^{\sub}=\{X^{\sub}(t)\}_{t\ge0}$ denote the subdiffusive process starting at the origin,
\begin{align*}
X^{\sub}(0)=0,
\end{align*}
whose pdf satisfies the fractional equation~\eqref{fractional} (note that $X^{\sub}$ can be constructed as a random time change of $X^{\diff}$ \cite{magdziarz2007}). Define $\tau^{\sub}$ and $T_{N}^{\sub}$ analogously to \eqref{td} and \eqref{tnd}. It was recently proven \cite{lawley2020sub} that 
\begin{align}\label{leading}
\E[T_{N}^{\sub}]
\sim\frac{t_{\alpha}}{(\ln N)^{2/\alpha-1}}
\quad\text{as }N\to\infty,
\end{align}
where $t_{\alpha}>0$ is the timescale,
\begin{align*}
t_{\alpha}
:=\Big(\alpha^{\alpha}(2-\alpha)^{2-\alpha}\frac{L^{2}}{4K_{\alpha}}\Big)^{1/\alpha}>0.
\end{align*}

The CTRW leading to the fractional equation~\eqref{fractional} can be put in the framework of section~\ref{general} above. In particular, consider a process $\{X^{\ctrw}(t)\}_{t\ge0}$ with waiting time pdf $w(t)$ satisfying \eqref{tail}-\eqref{dt44} and jump length pdf $l(x)$ given by a sum of Dirac delta functions,
\begin{align*}
l(x)
=\frac{1}{2}\delta(\Delta x-x)+\frac{1}{2}\delta(\Delta x+x),\quad x\in\R,
\end{align*}
with $\Delta x = L/d$. In the notation of section~\ref{general}, the state space $I$ is the discrete set in \eqref{1d}, the jump chain follows
\begin{align*}
\pi(i,j)
=\begin{cases}
1/2 & \text{if $|i-j|=1$},\\
0 & \text{otherwise},
\end{cases}
\end{align*}
and
\begin{align*}
F_{i,i\pm1}(t)
&=\int_{0}^{t}w(t')\,\dd t',\quad i\in\mathbb{Z}.
\end{align*}
Suppose that $\lim_{t\to0+}w(t)=2\lambda(d)>0$ for some function $\lambda(\cdot)$ so that $F(t):=F_{i,i\pm1}(t)\sim2\lambda(d) t$ as $t\to0+$. Thus $F$ satisfies \eqref{aij}-\eqref{short} with $r=1$ and $t_{0}=0$.

Therefore, in the diffusion limit $d\to\infty$, the pdf of the limiting process satisfies \eqref{fractional}, and thus the extreme FPTs satisfy \eqref{leading}. That is, if we take $d\to\infty$ first, and then take $N\to\infty$ limit, then we obtain \eqref{leading}. However, Theorem~\ref{main2} above shows that if we take $N\to\infty$ first for the CTRW $X^{\ctrw}$, then we obtain that the extreme FPTs satisfy
\begin{align}\label{leading2}
\E[T_{N}^{\ctrw}]
\sim\Big[\frac{\Gamma(1+1/d)}{(d!)^{1/d}\lambda(d)}\Big]\frac{1}{N^{1/d}},\quad\text{as }N\to\infty.
\end{align}
where $T_{N}^{\ctrw}$ is defined analogously to \eqref{tndc}. 

Comparing \eqref{leading} and \eqref{leading2}, we again see that the diffusion limit ($d\to\infty$) and the many searcher limit ($N\to\infty$) do not commute. In addition, comparing \eqref{Tctmc} and \eqref{leading2} shows that the extreme FPTs of the discrete state space diffusive process $X^{\ctmc}$ and the discrete state space subdiffusive process $X^{\ctrw}$ both decay as $N^{-1/d}$ as $N\to\infty$. In fact,
\begin{align*}
\E[T_{N}^{\ctmc}]
\sim\E[T_{N}^{\ctrw}],\quad\text{as }N\to\infty,
\end{align*}
if we take $\lambda(d)=(D/L^{2})d^{2}$. Hence, the behavior of extreme statistics is very different in the discrete case ($d<\infty$) compared to the continuum limit ($d=\infty$).

%%%%%%%%%%%%%%%%%%%%%%%%%%%%%%%%%%%%%%%%%%%%%%%%%%%%%%%%%%%%%%%%%%%%%%%%%%%%%%%%%%%%%%%%%%%%%%%%%%%%%%%%%%%%%%%%%%%%%%%%%%%%%%%%%%%%%%%%%%%%%%%%%%%%%%%%%%%%%%%%%%%%%%%%%%%%%%%%%%%%%%%%%%%%%%%%%%%%%%%%%%%%%%%%%%%%%%%%%%%%%%%%
\section{\label{mortal}Fast inactivation of mortal walkers}

Compared to a single FPT, we found in sections~\ref{ctmc} and \ref{general} that extreme FPTs are faster, less variable, and less affected by network size/structure. In essence, considering only the fastest FPTs filters out searchers which deviate from a direct route to the target. It was recently shown in \cite{ma2020, lawley2020mortal} that \emph{fast inactivation} can have a similar effect on FPTs by filtering out slow searchers. These two prior works considered searchers which move by continuous state space diffusion \cite{ma2020, lawley2020mortal} or discrete state space diffusion \cite{ma2020}. In this section, we consider fast inactivation for searchers on networks which move according to a CTMC as in section~\ref{ctmc} or a CTRW as in section~\ref{general}. 

Consider a single searcher that can be inactivated (degrade/die/evanesce/etc.)\ before reaching the target. Such finite lifetime searchers are called ``mortal'' or ``evanescent'' and have been widely studied \cite{abad2010, abad2012, abad2013, yuste2013, meerson2015b, meerson2015, grebenkov2017, ma2020, lawley2020mortal}. Indeed, mortal searchers have been used to model a variety of systems, including inactivation of intracellular signaling molecules \cite{ma2020}, sperm cells searching for an egg despite a high mortality rate \cite{meerson2015}, animals or bacteria foraging for food, extinction of a fluorescent signal in bio-imaging methods, messenger RNA searching for a ribosome, and storage of nuclear waste \cite{grebenkov2017}.

Mathematically, in addition to the FPT $\tau$ of a single searcher (as in \eqref{tau0}), one introduces an independent and exponentially distributed inactivation time $\sigma$ with rate $\gamma>0$,
\begin{align*}
\sigma
=_{\dist}\text{exponential}(\gamma).
\end{align*}
Hence, the event $\tau<\sigma$ means that the searcher found the target before it was inactivated, while $\sigma<\tau$ corresponds to the opposite scenario. Consider the $m$th moment of $\tau$, conditioned that the searcher finds the target before it is inactivated,
\begin{align}\label{conditional}
\E[\tau^{m}\,|\,\tau<\sigma]
:=\frac{\E[\tau^{m}1_{\tau<\sigma}]}{\P(\tau<\sigma)},
\end{align}
where $1_{\tau<\sigma}$ denotes the indicator function,
\begin{align*}
1_{\tau<\sigma}
:=\begin{cases}
1 & \text{if }\tau<\sigma,\\
0 & \text{otherwise}.
\end{cases}
\end{align*}

As in \cite{ma2020, lawley2020mortal}, we are interested in the behavior of the conditional FPT moments \eqref{conditional} in the limit of fast inactivation, i.e.\ $\gamma\to\infty$. The following theorem gives this behavior in terms of the short-time behavior of the unconditioned FPT $\tau$. In particular, Theorem~\ref{easy} is stated for an arbitrary random variable $\tau$ satisfying a certain assumption about its short-time distribution. The subsequent corollaries then consider the case that $\tau$ is a CTMC FPT as in section~\ref{ctmc} (Corollary~\ref{cor1}) and the case that $\tau$ is a CTRW FPT as in section~\ref{general} (Corollary~\ref{cor2}). 

\begin{theorem}\label{easy}
Let $\tau$ be any random variable satisfying
\begin{align*}
\P(\tau\le \tmin+t)
\sim At^{d}\quad\text{as }t\to0+,
\end{align*}
for some $\tmin\ge0$, $A>0$, and $d>0$. Let $\sigma$ be an independent exponential random variable with rate $\gamma>0$ and let $m>0$. If $\tmin=0$, then
\begin{align}\label{zero}
\E[\tau^{m}\,|\,\tau<\sigma]
\sim\frac{\Gamma(d+m)}{\Gamma(d)}\frac{1}{\gamma^{m}}\quad\text{as }\gamma\to\infty.
\end{align}
If $\tmin>0$, then
\begin{align}\label{positive}
\E[\tau^{m}\,|\,\tau<\sigma]-(\tmin)^{m}
\sim\frac{dm}{\gamma}(\tmin )^{m-1}\quad\text{as }\gamma\to\infty.
%\sim\frac{m}{\gamma}(\tmin )^{m-1}\quad\text{as }\gamma\to\infty.
\end{align}
\end{theorem}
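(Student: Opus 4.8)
The plan is to reduce the conditional moment to a ratio of Laplace transforms and then extract its $\gamma\to\infty$ asymptotics directly from the short-time datum $\P(\tau\le\tmin+t)\sim At^{d}$. Since $\sigma$ is exponential with rate $\gamma$ and independent of $\tau$, conditioning on $\tau$ gives $\P(\sigma>s)=e^{-\gamma s}$, whence
\[
\E[\tau^{m}\,|\,\tau<\sigma]
=\frac{\E[\tau^{m}1_{\tau<\sigma}]}{\P(\tau<\sigma)}
=\frac{\E[\tau^{m}e^{-\gamma\tau}]}{\E[e^{-\gamma\tau}]}.
\]
Both expectations are finite for every $\gamma>0$ because $t^{m}e^{-\gamma t}$ is bounded, so no integrability hypothesis on $\tau$ is required, and the denominator is positive. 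Writing $\tau=\tmin+\tau'$ with $\tau'\ge0$ and $F'(t):=\P(\tau'\le t)=\P(\tau\le\tmin+t)\sim At^{d}$, the factor $e^{-\gamma\tmin}$ cancels in the ratio, leaving $\E[\tau^{m}\,|\,\tau<\sigma]=\E[(\tmin+\tau')^{m}e^{-\gamma\tau'}]/\E[e^{-\gamma\tau'}]$.

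The core estimate is a Watson's-lemma computation valid for each $k\ge0$:
\[
\E[(\tau')^{k}e^{-\gamma\tau'}]
\sim\frac{A\,d\,\Gamma(d+k)}{\gamma^{d+k}}\quad\text{as }\gamma\to\infty.
\]
I would prove this by Lebesgue--Stieltjes integration by parts, turning $\int_{0}^{\infty}t^{k}e^{-\gamma t}\,\dd F'(t)$ into $\int_{0}^{\infty}(\gamma t^{k}-kt^{k-1})e^{-\gamma t}F'(t)\,\dd t$; the boundary terms vanish because $F'(0)=0$ (the asymptotic $F'(t)\sim At^{d}$ with $d>0$ forbids an atom at $0$) and $t^{k}e^{-\gamma t}\to0$ at infinity. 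Substituting $F'(t)\sim At^{d}$, using $\int_{0}^{\infty}t^{a}e^{-\gamma t}\,\dd t=\Gamma(a+1)/\gamma^{a+1}$, and invoking the identity $\Gamma(k+d+1)-k\Gamma(k+d)=d\,\Gamma(k+d)$ yields the stated form. Rigor follows from the standard split at a small $\delta>0$: on $[0,\delta]$ replace $F'$ by $(A+o(1))t^{d}$, while the contribution of $[\delta,\infty)$ is $O(e^{-\gamma\delta})$ and hence exponentially negligible against $\gamma^{-(d+k)}$.

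With this lemma the two cases follow. For $\tmin=0$ the numerator is the $k=m$ case and the denominator the $k=0$ case, and the ratio collapses via $\Gamma(d+1)=d\Gamma(d)$ to $\frac{\Gamma(d+m)}{\Gamma(d)}\gamma^{-m}$, which is \eqref{zero}. For $\tmin>0$ I would Taylor-expand $(\tmin+\tau')^{m}=\tmin^{m}+m\tmin^{m-1}\tau'+R(\tau')$ inside the numerator: the constant term contributes $\tmin^{m}$; the linear term contributes
\[
m\tmin^{m-1}\frac{\E[\tau'e^{-\gamma\tau'}]}{\E[e^{-\gamma\tau'}]}
\sim m\tmin^{m-1}\frac{\Gamma(d+1)}{\Gamma(d)}\frac{1}{\gamma}
=\frac{dm}{\gamma}\tmin^{m-1}
\]
by the $k=1$ and $k=0$ cases; and the remainder contributes $O(\gamma^{-2})$, giving \eqref{positive}.

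The main obstacle is controlling the Taylor remainder $R(\tau')$ in the $\tmin>0$ case. Near $\tau'=0$ one has $|R(\tau')|\le C(\tau')^{2}$, but for large $\tau'$ the bound grows polynomially and must be absorbed by the exponential weight $e^{-\gamma\tau'}$. I would handle this by estimating $\E[R(\tau')e^{-\gamma\tau'}]/\E[e^{-\gamma\tau'}]$ through the $k=2$ tilted moment on a fixed neighborhood of $0$ (which is $O(\gamma^{-2})$ by the lemma) together with an $O(e^{-\gamma\delta})$ tail bound outside, confirming the remainder is $o(\gamma^{-1})$ so that the linear term is genuinely the leading correction. The analogous (easier) tail control underlies the Watson's-lemma step in the second paragraph and is the only place where care beyond routine calculation is needed.
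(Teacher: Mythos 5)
Your proof is correct, and while its analytic core (converting the short-time behavior of the distribution function into large-$\gamma$ Laplace asymptotics via a split at a small $\delta$, with exponentially negligible tails) is the same as the paper's, the organization is genuinely different in two ways. First, the paper imports the representation \eqref{mortallemma} from Lemma~3 of \cite{lawley2020mortal}, whereas you derive the equivalent ratio $\E[\tau^{m}\,|\,\tau<\sigma]=\E[\tau^{m}e^{-\gamma\tau}]/\E[e^{-\gamma\tau}]$ directly from independence and then recover the $F$-integral form by Stieltjes integration by parts; this makes the argument self-contained, and your observation that no integrability hypothesis on $\tau$ is needed is correct. Second, and more substantively, you recenter at $\tmin$ and prove a single one-term estimate $\E[(\tau')^{k}e^{-\gamma\tau'}]\sim A\,d\,\Gamma(d+k)\,\gamma^{-(d+k)}$, extracting the cancellation in \eqref{positive} from the Taylor expansion of $(\tmin+\tau')^{m}$; the paper instead keeps $t^{m}$ and $(t-\tmin)^{d}$ together and must therefore push Watson's lemma to second order (its equation \eqref{yep3}) to see the same cancellation. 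Your route buys a cleaner, unified treatment of the two cases at the cost of the remainder estimate for $R(\tau')$, which you control correctly ($O(\gamma^{-2})$ near the origin via the $k=2$ estimate, $O(e^{-\gamma\delta})$ outside). One point to make explicit in a final write-up: your core lemma is a difference of two integrals each of order $\gamma^{-(d+k)}$, and since $\gamma t^{k}-kt^{k-1}$ changes sign, the $\delta$-sandwich $(1\pm\eps)At^{d}$ must be applied to the two positive integrals separately before subtracting; the subtraction then yields a valid asymptotic only because the limiting coefficients do not cancel, $\Gamma(d+k+1)-k\Gamma(d+k)=d\,\Gamma(d+k)>0$, which is exactly where the hypothesis $d>0$ enters. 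The paper's final step requires the identical non-cancellation check when combining its \eqref{this} with \eqref{mortallemma}, so this is a shared obligation rather than a defect of your approach.
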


The next two corollaries follow immediately from Theorem~\ref{easy} and Propositions~\ref{cdf} and \ref{cdf2}.

\begin{corollary}\label{cor1}
Let $\tau$ be as in section~\ref{ctmc} and let $d\ge1$ be as in Theorem~\ref{main}. Let $\sigma$ be an independent exponential random variable with rate $\gamma$ and let $m>0$. Equation~\eqref{zero} holds.
\end{corollary}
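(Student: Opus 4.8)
The plan is to combine Proposition~\ref{cdf} with Theorem~\ref{easy} directly, since the former supplies exactly the short-time hypothesis that the latter requires. First I would invoke Proposition~\ref{cdf}, which for the CTMC first passage time $\tau$ of section~\ref{ctmc} gives $\P(\tau\le t)\sim\frac{\Lambda}{d!}t^{d}$ as $t\to0+$, with $d=\dmin(\textup{supp}(\rho),I_{\t})\in\mathbb{Z}_{>0}$ and $\Lambda=\Lambda(\rho,I_{\t})>0$.

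Next I would observe that this is precisely the hypothesis of Theorem~\ref{easy} under the identification $\tmin=0$, $A=\Lambda/d!>0$, and $d>0$ as above. The crucial point making the $\tmin=0$ branch applicable is that for a CTMC the waiting times are exponential and hence have support all the way down to zero, so the fastest possible traversal time of any path is $t_{0}(\PP)=0$; equivalently, Proposition~\ref{cdf} states its asymptotics as $t\to0+$ rather than as $t\to\tmin+$ with $\tmin>0$. Thus $\tmin=0$, and the required relation $\P(\tau\le\tmin+t)\sim At^{d}$ holds verbatim.

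Finally, applying the $\tmin=0$ case of Theorem~\ref{easy}, namely equation~\eqref{zero}, yields $\E[\tau^{m}\,|\,\tau<\sigma]\sim\frac{\Gamma(d+m)}{\Gamma(d)}\gamma^{-m}$ as $\gamma\to\infty$, which is the claimed statement. I note that no separate integrability assumption on $\tau$ is needed here, because conditioning on $\tau<\sigma$ in the fast-inactivation limit $\gamma\to\infty$ automatically concentrates the conditional law on arbitrarily small values of $\tau$, where only the short-time behavior furnished by Proposition~\ref{cdf} enters.

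Since every ingredient is already in hand, there is essentially no obstacle: the sole thing to check is the bookkeeping identification $\tmin=0$ and $A=\Lambda/d!$, after which the result is immediate. The genuine analytic content resides in Theorem~\ref{easy} itself (a Watson/Laplace-type asymptotic analysis of $\E[\tau^{m}1_{\tau<\sigma}]/\P(\tau<\sigma)$ as $\gamma\to\infty$) and in Proposition~\ref{cdf} (the combinatorial count of geodesic paths giving $\Lambda$), both of which we are entitled to assume.
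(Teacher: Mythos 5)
Your proposal is correct and is exactly the paper's argument: the paper states that Corollary~\ref{cor1} follows immediately from Proposition~\ref{cdf} (which supplies $\P(\tau\le t)\sim\frac{\Lambda}{d!}t^{d}$, i.e.\ the hypothesis of Theorem~\ref{easy} with $\tmin=0$ and $A=\Lambda/d!$) together with the $\tmin=0$ case of Theorem~\ref{easy}. Your bookkeeping identification and your observation that exponential waiting times force $\tmin=0$ match the paper's reasoning, so nothing further is needed.
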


\begin{corollary}\label{cor2}
Let $\tau$ be as in section~\ref{general} and let $d\ge1$ and $\tmin\ge0$ be as in Theorem~\ref{main2}. Let $\sigma$ be an independent exponential random variable with rate $\gamma$ and let $m>0$. If $\tmin=0$, then \eqref{zero} holds. If $\tmin>0$, then \eqref{positive} holds.
\end{corollary}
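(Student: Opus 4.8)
The plan is to rewrite the conditional moment as a ratio of Laplace transforms of $\tau$ and then read off its $\gamma\to\infty$ behaviour from the short-time profile $\P(\tau\le\tmin+t)\sim At^{d}$. Because $\sigma$ is exponential with rate $\gamma$ and independent of $\tau$, conditioning on $\tau$ gives $\P(\tau<\sigma\mid\tau)=e^{-\gamma\tau}$, so from the definition \eqref{conditional},
\begin{align*}
\E[\tau^{m}\mid\tau<\sigma]
=\frac{\E[\tau^{m}1_{\tau<\sigma}]}{\P(\tau<\sigma)}
=\frac{\E[\tau^{m}e^{-\gamma\tau}]}{\E[e^{-\gamma\tau}]}.
\end{align*}
Both quantities are finite for every $\gamma>0$ since $t\mapsto t^{m}e^{-\gamma t}$ is bounded on $[0,\infty)$, so no moment hypothesis on $\tau$ is required. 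As $\gamma\to\infty$, the weight $e^{-\gamma\tau}$ concentrates the mass near the infimum $\tmin$ of $\tau$, which is what makes the short-time law of $\tau$ the only relevant input.

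The key step is a Watson's-lemma/Abelian estimate, which I would first prove when $\tmin=0$. Writing $F(t)=\P(\tau\le t)$, so that $F(t)\sim At^{d}$ as $t\to0+$ and $F(0)=0$, I claim that for every fixed $c\ge0$,
\begin{align*}
\E[\tau^{c}e^{-\gamma\tau}]
=\int_{0}^{\infty}t^{c}e^{-\gamma t}\,\dd F(t)
\sim Ad\,\Gamma(c+d)\,\gamma^{-(c+d)}
\quad\text{as }\gamma\to\infty.
\end{align*}
To avoid assuming $\tau$ has a density, I integrate by parts in this Stieltjes integral: the boundary terms vanish (at $0$ because $t^{c}e^{-\gamma t}F(t)\sim At^{c+d}\to0$, and at $\infty$ by exponential decay), leaving $\int_{0}^{\infty}e^{-\gamma t}t^{c-1}(\gamma t-c)F(t)\,\dd t$. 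Splitting the domain as $[0,\delta]\cup(\delta,\infty)$, the tail is exponentially small relative to $\gamma^{-(c+d)}$, while on $[0,\delta]$ I replace $F(t)$ by $At^{d}(1+o(1))$; the two resulting elementary integrals give $A\Gamma(c+d+1)\gamma^{-(c+d)}-Ac\,\Gamma(c+d)\gamma^{-(c+d)}$, and $\Gamma(c+d+1)=(c+d)\Gamma(c+d)$ collapses this to $Ad\,\Gamma(c+d)\gamma^{-(c+d)}$.

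With this lemma in hand the two cases are short. When $\tmin=0$, applying it with $c=m$ and $c=0$ and dividing gives
\begin{align*}
\E[\tau^{m}\mid\tau<\sigma]
\sim\frac{Ad\,\Gamma(m+d)\,\gamma^{-(m+d)}}{Ad\,\Gamma(d)\,\gamma^{-d}}
=\frac{\Gamma(d+m)}{\Gamma(d)}\,\gamma^{-m},
\end{align*}
which is \eqref{zero}. When $\tmin>0$, I set $\tau'=\tau-\tmin$; since $\P(\tau\le\tmin)=0$, $\tau'\ge0$ satisfies the $\tmin=0$ hypothesis with the same $A,d$, and the factors $e^{-\gamma\tmin}$ cancel, so
\begin{align*}
\E[\tau^{m}\mid\tau<\sigma]-(\tmin)^{m}
=\frac{\E\big[\big((\tmin+\tau')^{m}-(\tmin)^{m}\big)e^{-\gamma\tau'}\big]}{\E[e^{-\gamma\tau'}]}.
\end{align*}
Taylor expanding $(\tmin+s)^{m}=(\tmin)^{m}+m(\tmin)^{m-1}s+R(s)$ with $|R(s)|\le C_{\delta}s^{2}$ for $s\le\delta$, the linear term contributes $m(\tmin)^{m-1}\E[\tau'e^{-\gamma\tau'}]/\E[e^{-\gamma\tau'}]\sim m(\tmin)^{m-1}(d/\gamma)$ by the lemma with $c=1$ and $c=0$, while the remainder is $O(\gamma^{-2})$ and the $\{\tau'>\delta\}$ contribution is exponentially small; this gives \eqref{positive}.

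I expect the main obstacle to be the Laplace asymptotic of the second paragraph for general real exponents $c,d$ and for a $\tau$ that need not possess a density. The Stieltjes integration by parts is exactly what converts the unknown measure $\dd F$ into the known tail profile $F(t)\sim At^{d}$, and the $[0,\delta]/(\delta,\infty)$ split is what upgrades this pointwise asymptotic to the sharp $\sim$ statement, the only technical care being the uniform control of the $o(1)$ on $[0,\delta]$ and the integrability of $t^{c-1}F(t)$ at the origin when $c<1$, which holds since $c+d>0$. The remaining steps are bookkeeping, the single extra subtlety being the uniform Taylor remainder bound in the $\tmin>0$ case, which is immediate because $s\mapsto(\tmin+s)^{m}$ is smooth near $s=0$ once $\tmin>0$.
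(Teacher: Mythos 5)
Your proposal is correct, and its analytic core -- Abelian/Watson-type asymptotics of Laplace integrals of $F$ driven by the short-time law $\P(\tau\le\tmin+t)\sim At^{d}$ supplied by Proposition~\ref{cdf2} -- is the same engine the paper uses. The packaging differs in two genuine ways. First, the paper proves the corollary by invoking Theorem~\ref{easy}, whose proof starts from the representation \eqref{mortallemma} imported from Lemma~3 of \cite{lawley2020mortal}; you instead derive the equivalent representation $\E[\tau^{m}\,|\,\tau<\sigma]=\E[\tau^{m}e^{-\gamma\tau}]/\E[e^{-\gamma\tau}]$ from first principles via $\P(\tau<\sigma\,|\,\tau)=e^{-\gamma\tau}$, and your Stieltjes integration by parts is precisely what re-proves that cited lemma, so your argument is self-contained where the paper leans on an external reference. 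Second, for $\tmin>0$ the paper applies Watson's lemma directly to the shifted integrands $t^{m}e^{-\gamma t}(t-\tmin)^{d}$ (its \eqref{yep}--\eqref{yep4}), whereas you recenter via $\tau'=\tau-\tmin$, cancel the common factor $e^{-\gamma\tmin}$, and Taylor expand $(\tmin+s)^{m}$; these are equivalent bookkeepings of the same expansion, and your version makes the cancellation of $(\tmin)^{m}$ transparent. One detail you should make explicit in your key lemma: after integration by parts the integrand $e^{-\gamma t}t^{c-1}(\gamma t-c)F(t)$ changes sign at $t=c/\gamma$, so the sandwich $(1-\eps)At^{d}<F(t)<(1+\eps)At^{d}$ must be applied separately to the two sign-definite pieces $\gamma t^{c}e^{-\gamma t}F(t)$ and $c\,t^{c-1}e^{-\gamma t}F(t)$ (this is how the paper handles the two integrals appearing in \eqref{mortallemma}); since the two limiting constants differ by $Ad\,\Gamma(c+d)>0$, the resulting $O(\eps)$ relative errors are harmless and your conclusion stands. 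Your observation that no moment hypothesis on $\tau$ is needed is also consistent with the paper, whose Theorem~\ref{easy} imposes none.
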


Theorem~\ref{easy} and Corollaries~\ref{cor1} and \ref{cor2} show that FPTs conditioned to be less than a fast inactivation time and extreme FPTs have similar qualitative properties. In particular, compared to unconditioned FPTs, such conditional FPTs are faster, less variable (all the moments vanish), and are less affected by the network size/structure/details, since they depend only on the minimum number of jumps $d$ that are required to reach the target. Corollary~\ref{cor1} recovers some results proven in \cite{ma2020} for a discrete state space diffusion model.

%%%%%%%%%%%%%%%%%%%%%%%%%%%%%%%%%%%%%%%%%%%%%%%%%%%%%%%%%%%%%%%%%%%%%%%%%%%%%%%%%%%%%%%%%%%%%%%%%%%%%%%%%%%%%%%%%%%%%%%%%%%%%%%%%%%%%%%%%%%%%%%%%%%%%%%%%%%%%%%%%%%%%%%%%%%%%%%%%%%%%%%%%%%%%%%%%%%%%%%%%%%%%%%%%%%%%%%%%%%%%%%%
\section{\label{discussion}Discussion}

We have analyzed extreme FPTs for a general class of CTRWs on networks. In the case that there are many searchers (random walkers), we found explicit formulas for the extreme FPT distribution and moments that depend only on the parameters along the geodesic path(s) from the starting location(s) to the target. Hence, the extreme FPTs are independent of the details of the network outside this geodesic(s). We proved similar results for searchers which are conditioned to find the target before a fast inactivation time.

Extreme FPTs have been studied extensively for diffusion processes with continuous state spaces \cite{weiss1983, yuste2000, yuste2001, redner2014, meerson2015, ro2017, godec2016x, hartich2018, hartich2019, basnayake2019, lawley2020esp, lawley2020uni, lawley2020dist, lawley2020sub}. This project was started in 1983 by Weiss, Shuler, and Lindenberg \cite{weiss1983}, and more recent work has been motivated primarily by biological applications \cite{schuss2019}. Interesting work has also been done for extreme FPTs of diffusion on fractals \cite{yuste1997, yuste1998}. These previous works are marked by an inverse logarithmic decay of the mean extreme FPT,
\begin{align}\label{uni}
\E[T_{N}]
%\sim\frac{L^{2}}{4D\ln N},
\propto\frac{1}{\ln N},
\end{align}
as the number of searchers $N$ grows. Indeed, it was recently proven \cite{lawley2020uni} that \eqref{uni} holds for diffusive search under very general assumptions, as long as the searchers cannot start arbitrarily close to the target. If the diffusive searchers start uniformly in the spatial domain (which means that they can start arbitrarily close to the target), then it was proven in \cite{lawley2020comp} that as $N\to\infty$,
\begin{align}\label{other}
\E[T_{N}]
\propto\frac{1}{N^{2}}
\quad\text{or}\quad
\E[T_{N}]
\propto\frac{1}{N}, %\quad\text{as }N\to\infty,
\end{align}
depending on whether the target is perfectly or partially reactive (the result $\E[T_{N}]
\propto N^{-2}$ for a perfectly reactive target was in fact first shown in \cite{weiss1983}).

In contrast to \eqref{uni}-\eqref{other}, in the present work we found that extreme FPTs of CTRWs on discrete networks decay as
\begin{align}\label{new}
\E[T_{N}]
\propto\frac{1}{N^{1/d}},\quad\text{as }N\to\infty,
\end{align}
where $d\ge1$ is the minimum number of jumps required to reach the target. Comparing \eqref{uni} and \eqref{new}, it is clear that the behavior of extreme FPTs of ``diffusion'' depend critically on whether the diffusion is modeled by a continuous state space or a discrete state space. See section~\ref{diffusion} above for more on this discrepancy.

An interesting related work studying extreme FPTs for processes on discrete networks is that of Weng and colleagues \cite{weng2017}. In \cite{weng2017}, the authors investigated the mean of extreme FPTs for \emph{discrete-time} random walks on finite networks (termed the ``mean first parallel passage time''). These authors found an exact formula for this mean time in terms of a matrix describing the network structure. Then, upon averaging over starting locations and target locations, they found that this ``global'' mean time decays as $1/N$ as the number of searchers $N$ grows. 

An important line of related works is \cite{braunstein2003, gautreau2007, gautreau2008, brockmann2013, iannelli2017}, which study various network-based measures which generalize the concept of distance. These measures define the ``effective distance'' between pairs of nodes in a network by taking into account the probabilities of paths between the nodes. Some of this work seeks to understand the arrival time of an infectious disease to a given location. In particular, these works seek to incorporate the idea that frequently traveled routes between two locations (such as airports) make them effectively closer.

In the present work, we similarly found that a certain geodesic path between the source node(s) and the target node(s) controls the extreme FPTs. We found that the geodesic path that is relevant for extreme FPTs minimizes the number of intermediary nodes between the source and target (and the minimum time for the general model in section~\ref{general}). We emphasize that this is a result of the analysis and not an assumption. Indeed, one might have expected that other notions of ``optimal'' paths \cite{braunstein2003} or most probable paths would yield the paths taken by the fastest searchers. However, we have found that this is not the case, as the probability of a path or the rates along a path play a strictly secondary role for extreme FPTs.

Another related work is a novel study of the transport efficiency of the endoplasmic reticulum \cite{dora2020}, which modeled the endoplasmic reticulum as an active network. These authors found a remarkable mode of transportation, in which molecules group together in seemingly redundant packets at particular locations in the network. Similar to the present work, these authors then found that the extreme FPT out of these many apparently redundant molecules is much faster than a single FPT. The extreme FPTs in \cite{dora2020} were computed by making a diffusion approximation and applying results for extreme FPTs of diffusion (which yielded an inverse logarithmic decay of the mean extreme FPT as in \eqref{uni}).

One final related work is the recent study of Ma and colleagues \cite{ma2020}, which considered the effects of a fast inactivation time on mortal diffusive searchers in the context of intracellular signaling. These authors found that if a signaling molecule is conditioned to reach the nucleus before a fast inactivation time, then the FPT is much faster, much less variable, and much less affected by intracellular geometry/obstacles (compared to unconditioned, immortal searchers). As in the case of extreme statistics, such conditioning filters out searchers which deviate from the shortest path to the target. Mathematically, Ref.~\cite{ma2020} considered both continuous state space and discrete state space models of diffusive search, and Ref.~\cite{lawley2020mortal} later considered this problem for continuous state space diffusive search. Corollaries~\ref{cor1} and \ref{cor2} in section~\ref{mortal} above extend some results in \cite{ma2020} to the case of general CTMCs and CTRWs on networks.

% If you have acknowledgments, this puts in the proper section head.
\subsection*{Acknowledgments}

The author gratefully acknowledges support from the National Science Foundation (DMS-1944574, DMS-1814832, and DMS-1148230).

%%%%%%%%%%%%%%%%%%%%%%%%%%%%%%%%%%%%%%%%%%%%%%%%%%%%%%%%%%%%%%%%%%%%%%%%%%%%%%%%%%%%%%%%%%%%%%%%%%%%%%%%%%%%%%%%%%%%%%%%%%%%%%%%%%%%%%%%%%%%%%%%%%%%%%%%%%%%%%%%%%%%%%%%%%%%%%%%%%%%%%%%%%%%%%%%%%%%%%%%%%%%%%%%%%%%%%%%%%%%%%%%
\section{\label{appendix}Appendix}

In this appendix, we prove the propositions and theorems of the main text. We begin with a lemma giving the short-time behavior of the cumulative distribution function of a sum of independent random variables. 

%%%%%%%%%%%%%%%%%%%%%%%%%%
\begin{lemma}\label{lemmaexp}
If $\{\sigma_{m}\}_{m=0}^{d-1}$ are independent random variables with
\begin{align*}
\P(\sigma_{m}\le t)
\sim\lambda_{m}t^{r},\quad\text{as }t\to 0+,
\end{align*}
for $\lambda_{m}>0$ and $r>0$, then
\begin{align*}
\P\Big(\sum_{m=0}^{d-1}\sigma_{m}\le t\Big)
\sim \frac{(\Gamma(r+1))^{d}\prod_{m=0}^{d-1}\lambda_{m}}{\Gamma(dr+1)}t^{dr},\; \text{as }t\to0+.
\end{align*}
\end{lemma}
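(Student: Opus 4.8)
The plan is to pass to the Laplace--Stieltjes transform (LST), since the LST of a sum of independent random variables factorizes into a product, and since the small-$t$ behaviour of a distribution function is encoded in the large-$s$ behaviour of its LST through Abelian/Tauberian theorems. Write $F_m(t):=\P(\sigma_m\le t)$ and let $\Sigma:=\sum_{m=0}^{d-1}\sigma_m$. First observe that the hypothesis $F_m(t)\sim\lambda_m t^r$ forces $\P(\sigma_m\le0)=\lim_{t\to0+}\P(\sigma_m\le0)\le\lim_{t\to0+}\lambda_m t^r=0$, so each $\sigma_m>0$ almost surely and $\omega_m(s):=\E[e^{-s\sigma_m}]\le1$ is finite for $s>0$.

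The first (Abelian) step is to show $\omega_m(s)\sim\lambda_m\Gamma(r+1)s^{-r}$ as $s\to\infty$. Integrating by parts (the boundary terms vanish because $F_m(0)=0$ and $e^{-st}F_m(t)\to0$) gives $\omega_m(s)=s\int_0^\infty e^{-st}F_m(t)\,\dd t$. I would split the integral at a small $\delta>0$: on $[\delta,\infty)$ the contribution is exponentially small and hence negligible against $s^{-r}$, while on $[0,\delta]$ I substitute $F_m(t)=\lambda_m t^r(1+o(1))$ and use the Watson-type estimate $s\int_0^\infty e^{-st}t^r\,\dd t=\Gamma(r+1)s^{-r}$ (extending the upper limit to $\infty$ at exponentially small cost). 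By independence the LST of the sum factorizes, so
\[
\omega_\Sigma(s)=\prod_{m=0}^{d-1}\omega_m(s)\sim(\Gamma(r+1))^{d}\Big(\prod_{m=0}^{d-1}\lambda_m\Big)s^{-dr}\quad\text{as }s\to\infty.
\]

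The second (Tauberian) step inverts this asymptotic. Since $F_\Sigma(t):=\P(\Sigma\le t)$ is nondecreasing and bounded, Karamata's Tauberian theorem (in its $s\to\infty$, $t\to0+$ form, with a constant slowly varying function) applies with index $\rho=dr$ and constant $C=(\Gamma(r+1))^{d}\prod_m\lambda_m$, yielding $F_\Sigma(t)\sim \frac{C}{\Gamma(dr+1)}t^{dr}$, which is exactly the claimed constant $\frac{(\Gamma(r+1))^{d}\prod_{m=0}^{d-1}\lambda_m}{\Gamma(dr+1)}$.

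I expect the Tauberian inversion to be the main obstacle, in the sense that one must invoke the correct $s\to\infty$ version of the theorem and verify its monotonicity and slow-variation hypotheses (both immediate here, since $F_\Sigma$ is a distribution function and the relevant slowly varying factor is constant). An alternative, transform-free route is induction on $d$: writing $F_{\Sigma_d}=F_{\Sigma_{d-1}}*F_{\sigma_{d-1}}$ as a Stieltjes convolution and extracting the leading term produces the Beta integral $\int_0^1(1-u)^{(d-1)r}u^{r-1}\,\dd u=B((d-1)r+1,r)$, whose ratio of Gamma functions telescopes to the same constant. The difficulty in that approach is that the hypotheses supply only CDF asymptotics and not densities, so the heuristic $\dd F_{\sigma_{d-1}}(s)\approx\lambda_{d-1}r\,s^{r-1}\,\dd s$ must be justified rigorously by integration by parts together with a dominated-convergence argument. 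For this reason I would present the Laplace-transform argument as the primary proof.
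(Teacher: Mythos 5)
Your proposal is correct and follows essentially the same route as the paper's proof: pass to the Laplace--Stieltjes transform, factorize it over the independent summands, and convert between the $t\to0+$ behaviour of the distribution functions and the $s\to\infty$ behaviour of the transforms via the Abelian/Tauberian theorems (the paper cites Theorems~1 and 3 of chapter~XIII.5 in Feller for both directions, where you prove the Abelian step by hand with a Watson-type estimate). The only difference is this extra self-contained detail on the forward step, which does not change the structure of the argument.
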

%%%%%%%%%%%%%%%%%%%%%%%%%%

%%%%%%%%%%%%%%%%%%%%%%%%%%
\begin{proof}[Proof of Lemma~\ref{lemmaexp}]
If $X$ is a random variable with cumulative distribution function $F_{X}(t):=\P(X\le t)$, then let $\L^{*}$ denote the Laplace-Stieltjes transform,
\begin{align*}
\L^{*} F
=\{\L^{*} F\}(s)
:=\E[e^{-sX}].
\end{align*}
Letting
\begin{align*}
F_{\sigma_{d}}(t)
&:=\P(\sigma_{d}\le t),\\
F_{\sum_{m=0}^{d-1}\sigma_{d}}(t)
&:=\P\Big(\sum_{m=0}^{d-1}\sigma_{d}\le t\Big),
\end{align*}
independence implies that
\begin{align}\label{indep}
\{\L^{*}F_{\sum_{m=0}^{d-1}\sigma_{d}}\}(s)
=\E[e^{-s\sum_{m=0}^{d-1}\sigma_{d}}]
&=\prod_{m=0}^{d-1}\E[e^{-s\sigma_{d}}]
=\prod_{m=0}^{d-1}\{\L^{*}F_{\sigma_{d}}\}(s).
\end{align}
By the Tauberian theorem (see, for example, Theorems 1 and 3 in chapter XIII.5 in \cite{feller1968}), we have that
\begin{align*}
\{\L^{*}F_{\sigma_{d}}\}(s)
\sim\Gamma(r+1)\lambda_{m}s^{-r},\quad\text{as }s\to\infty.
\end{align*}
Therefore, \eqref{indep} implies that
\begin{align*}
\{\L^{*}F_{\sum_{m=0}^{d-1}\sigma_{d}}\}(s)
\sim(\Gamma(r+1))^{d}s^{-dr}\prod_{m=0}^{d-1}\lambda_{m},\quad\text{as }s\to\infty.
\end{align*}
Applying the Tauberian theorem again yields
\begin{align*}
F_{\sum_{m=0}^{d-1}\sigma_{d}}(t)
\sim \frac{(\Gamma(r+1))^{d}\prod_{m=0}^{d-1}\lambda_{m}}{\Gamma(dr+1)}t^{dr},\quad\text{as }t\to0+,
\end{align*}
which completes the proof.
\end{proof}
%%%%%%%%%%%%%%%%%%%%%%%%%%

%%%%%%%%%%%%%%%%%%%%%%%%%%
\begin{proof}[Proof of Proposition~\ref{cdf}]

We first prove the proposition for the case that $\rho(i_{0})=1$ for some fixed $i_{0}\in I$ and $\rho(j)=0$ for all $j\neq i_{0}$. That is, assume $X(0)=i_{0}$ almost surely.

Let $M(t)\in\mathbb{N}\cup\{0\}$ be the number of jumps of $X(t)$ before time $t$. Then
\begin{align}\label{start}
\P(\tau\le t)
=\P(\tau\le t\cap M(t)\ge d),%\label{sum}
%&=\sum_{m=0}^{\infty}\P(\tau\le t\cap M(t)=m)\nonumber\\
%&=\sum_{m=d}^{\infty}\P(\tau\le t\cap M(t)=m),\label{sum}
\end{align}
since $X$ cannot reach the target from state $i_{0}$ unless it makes at least $d=\dmin(i_{0},I_{\t})\ge1$ jumps. Since $I$ is countable, there are countably many paths of length $d$ from $i_{0}$ to $I_{\t}$. We can thus index the paths so that
\begin{align*}
\mathcal{S}(i_{0},I_{\t})
=\{\PP_{k}\}_{k\in K},
\end{align*}
where $K\subseteq\mathbb{N}$ is some index set. Let $\mathcal{E}_{k}$ denote the event that $X$ takes path $k\in K$ from $i_{0}$ to $I_{\t}$. Notice that
\begin{align}\label{notice}
\P(\mathcal{E}_{k})
=\prod_{i=0}^{d-1}\frac{q(\PP_{k}(i),\PP_{k}(i+1))}{q(\PP_{k}(i))}
%=\frac{\prod_{i=0}^{d-1}q(\PP_{i},\PP_{i+1})}{\prod_{i=0}^{d}q(\PP_{k}(i))}
=\frac{\lambda(\PP_{k})}{\prod_{i=0}^{d}q(\PP_{k}(i))},\; k\in K.
\end{align}
Now,
\begin{align*}
\P(\tau\le t\cap M(t)\ge d\,|\,\mathcal{E}_{k})
&=\P(M(t)\ge d\,|\,\mathcal{E}_{k})
=\P\Big(\sum_{m=0}^{d-1}\sigma_{m}/q(\PP_{k}(m))<t\Big),
%&=\P\Big(\sum_{m=0}^{d-1}\sigma_{m}<t\Big),
\end{align*}
where $\{\sigma_{m}\}_{m=0}^{d-1}$ are iid exponential random variables with unit rate. Hence, Lemma~\ref{lemmaexp} and \eqref{notice} imply that
\begin{align}\label{from}
\P(\tau\le t\cap M(t)\ge d\,|\,\mathcal{E}_{k})\P(\mathcal{E}_{k})
\sim\frac{\lambda(\PP_{k})}{d!}t^{d},\quad\text{as }t\to0+.
\end{align}

We want to conclude from \eqref{from} that 
\begin{align}
\P(\tau\le t\cap M(t)\ge d)
&=\sum_{k\in K}\P(\tau\le t\cap M(t)\ge d\,|\, \mathcal{E}_{k})\P(\mathcal{E}_{k})\nonumber\\
&\sim\sum_{k\in K}\frac{\lambda(\PP_{k})}{d!}t^{d},\quad\text{as }t\to0+.\label{dct}
\end{align}
If $|K|<\infty$, then this is immediate. To handle the case that $|K|=\infty$, notice that Lemma~\ref{lemmaexp} implies that
\begin{align*}
\P(\tau\le t\cap M(t)\ge d\,|\, \mathcal{E}_{k})
&=\P\Big(\sum_{m=0}^{d-1}\sigma_{m}/q(\PP_{k}(m))<t\Big)\\
&\le\P\Big(\sum_{m=0}^{d-1}\sigma_{m}/\overline{q}<t\Big)
\sim\frac{\overline{q}^{d}}{d!}t^{d},\quad \text{as }t\to0+,
\end{align*}
where $\overline{q}:=\sup_{i}q(i)<\infty$. Therefore, there exists an $\eps>0$ that is independent of $k\in K$ so that
\begin{align}\label{a00}
\begin{split}
&\frac{\P(\tau\le t\cap M(t)\ge d\,|\, \mathcal{E}_{k})\P(\mathcal{E}_{k})}{t^{d}}
\le2\frac{\overline{q}^{d}}{d!}\P(\mathcal{E}_{k}),\quad\text{for all }t\in(0,\eps),\,k\in K.
\end{split}
\end{align}
Since
\begin{align}\label{ab00}
\sum_{k\in K}2\frac{\overline{q}^{d}}{d!}\P(\mathcal{E}_{k})
=2\frac{\overline{q}^{d}}{d!}<\infty,
\end{align}
Lebesgue's dominated convergence theorem yields \eqref{dct}, which then completes the proof for the case $\rho(i_{0})=1$ due to \eqref{start}.

To handle the case of a general initial distribution $\rho$ on $I$, observe that
\begin{align}\label{yep0}
\P(\tau\le t)
=\sum_{i\in \text{supp}(\rho)}\P(\tau\le t\,|\,X(0)=i)\rho(i).
\end{align}
The desired result is then immediate if the support of $\rho$ is finite. A similar application of the dominated convergence theorem as above completes the proof for the case that $I$ is infinite. In particular, as in \eqref{a00}-\eqref{ab00} we have that if $t\in(0,\eps)$, then
\begin{align*}
&t^{-d}\P(\tau\le t\,|\,X(0)=i)\rho(i)
=\rho(i)\sum_{k\in K}t^{-d}\P(\tau\le t\,|\,X(0)=i\cap \mathcal{E}_{k})\P(\mathcal{E}_{k})
\le\rho(i)2\frac{\overline{q}^{d}}{d!}.
\end{align*}
Since $\sum_{i\in\text{supp}(\rho)}\rho(i)2\overline{q}^{d}/d!=2\overline{q}^{d}/d!<\infty$, the proof is complete.
\end{proof}
%%%%%%%%%%%%%%%%%%%%%%%%%%

%%%%%%%%%%%%%%%%%%%%%%%%%%
\begin{proof}[Proof of Theorem~\ref{main}]
The result follows directly from Proposition~\ref{cdf} and Theorems~2 and 3 in \cite{lawley2020comp}.
\end{proof}
%%%%%%%%%%%%%%%%%%%%%%%%%%

%%%%%%%%%%%%%%%%%%%%%%%%%%
\begin{proof}[Proof of Theorem~\ref{kth}]
The result follows directly from Proposition~\ref{cdf} and Theorems~5 and 6 in \cite{lawley2020comp}.
\end{proof}
%%%%%%%%%%%%%%%%%%%%%%%%%%

%%%%%%%%%%%%%%%%%%%%%%%%%%
\begin{proof}[Proof of Proposition~\ref{cdf2}]
As in the proof of Proposition~\ref{cdf}, we first consider the case that $\rho(i_{0})=1$ for some fixed $i_{0}\in I$ and $\rho(j)=0$ for all $j\neq i_{0}$. That is, we assume $X(0)=i_{0}$ almost surely.

Since $I$ is finite, it is immediate that there exists $\eps>0$ so that $t_{0}(\PP)>{\tmin}+\eps$ for all $\PP$ with $t_{0}(\PP)\neq\tmin(i_{0},I_{\t})$. Hence, if $\mathcal{E}$ denotes the event that $X$ takes a path $\PP$ with $t_{0}(\PP)\neq\tmin(i_{0},I_{\t})$, then
\begin{align*}
\P(\tau\le {\tmin}+t\cap\mathcal{E})
=0,\quad\text{for all }t<\eps.
\end{align*}
Therefore, if we index all the paths $\PP$ with $t_{0}(\PP)=\tmin(i_{0},I_{\t})$ as $\{\PP_{k}\}_{k\in K}$ and let $\mathcal{E}_{k}$ denote the event that $X$ takes path $\PP_{k}$, then
\begin{align*}
\P(\tau\le {\tmin}+t)
\sim\sum_{k\in K}\P(\tau\le {\tmin}+t\cap\mathcal{E}_{k}),\;\text{as }t\to0+.
\end{align*}
Let $M(t)\in\mathbb{N}\cup\{0\}$ be the number of jumps of $X(t)$ before time $t$. Now,
\begin{align*}
&\P(\tau\le\tmin+t\cap\mathcal{E}_{k})
=\P(\tau\le\tmin+t\cap M(\tmin+t)\ge d \cap\mathcal{E}_{k}),\quad k\in K,
\end{align*}
since if $X$ takes path $\PP_{k}$ with $k\in K$, then it must make at least $d=\dmin(i_{0},I_{\t})\ge1$ jumps to reach the target.

Next, let $K'\subseteq K$ be an index set so that $\{\PP_{k}\}_{k\in K'}$ are the set of paths in $\mathcal{S}(i_{0},I_{\t})$. It is then immediate that
\begin{align*}
&\P(\tau\le {\tmin}+t\cap M(\tmin+t)\ge d\,|\,\mathcal{E}_{k})
=\P(M(\tmin+t)\ge d\,|\,\mathcal{E}_{k}),
\quad k\in K'.
\end{align*}
Further, if $X$ takes path $\PP_{k}$ with $k\in K'$ and $M(\tmin+t)\ge d$, then
\begin{align*}
\sum_{m=0}^{d-1}\sigma_{m}\le {\tmin}+t,
\end{align*}
where $\{\sigma_{m}\}_{m=0}^{d-1}$ are the $d\ge1$ waiting times. In particular, $\sigma_{m}$ has the distribution
\begin{align*}
\P(\sigma_{m}\le s)
=F_{\PP_{k}(m),\PP_{k}(m+1)}(s).
\end{align*}
Therefore, if we define
\begin{align*}
\widetilde{\sigma}_{m}
=\sigma_{m}-t_{0}(\PP_{k}(m),\PP_{k}(m+1)),
\end{align*}
then $\P(\widetilde{\sigma}_{m}\le s)=\widetilde{F}_{\PP_{k}(m),\PP_{k}(m+1)}(s)$, where
\begin{align*}
&\widetilde{F}_{\PP_{k}(m),\PP_{k}(m+1)}(s)
:=F_{\PP_{k}(m),\PP_{k}(m+1)}(s+t_{0}(\PP_{k}(m),\PP_{k}(m+1))),
\end{align*}
and thus as $s\to0+$,
\begin{align*}
\widetilde{F}_{\PP_{k}(m),\PP_{k}(m+1)}(s)
\sim\lambda(\PP_{k}(m),\PP_{k}(m+1))s^{r}. %,\;\text{as }s\to0.
\end{align*}
Therefore, by Lemma~\ref{lemmaexp}, we have that for $k\in K'$,
\begin{align*}
&\P(\tau\le {\tmin}+t\cap M(\tmin+t)\ge d\,|\,\mathcal{E}_{k})\\
&=\P\Big(\sum_{m=0}^{d-1}\sigma_{m}\le {\tmin}+t\Big)\\
&=\P\Big(\sum_{m=0}^{d-1}\big[t_{0}(\PP_{k}(m),\PP_{k}(m+1))+\widetilde{\sigma}_{m}\big]\le {\tmin}+t\Big)\\
&=\P\Big(\sum_{m=0}^{d-1}\widetilde{\sigma}_{m}\le t\Big)
\sim
\frac{(\Gamma(r+1))^{d}\lambda(\PP_{k})}{\Gamma(dr+1)}t^{dr},\; \text{as }t\to0+.
%&\frac{\lambda(\PP)}{d!}t^{dr},\quad \text{as }t\to0+.
\end{align*}
Noting that $\P(\mathcal{E}_{k})=\pi(\PP_{k})$ and summing over $k$ completes the proof for the case $\rho(i_{0})=1$ (note that $|K|<\infty$ since $|I|<\infty$). The case of a general distribution $\rho$ on $I$ is handled analogously to \eqref{yep0}.
\end{proof}
%%%%%%%%%%%%%%%%%%%%%%%%%%

%%%%%%%%%%%%%%%%%%%%%%%%%%
\begin{proof}[Proof of Theorem~\ref{main2}]
The result follows directly from Proposition~\ref{cdf2} and Theorems~2 and 3 in \cite{lawley2020comp}.
\end{proof}
%%%%%%%%%%%%%%%%%%%%%%%%%%

%%%%%%%%%%%%%%%%%%%%%%%%%%
\begin{proof}[Proof of Theorem~\ref{kth2}]
The result follows directly from Proposition~\ref{cdf2} and Theorems~5 and 6 in \cite{lawley2020comp}.
\end{proof}
%%%%%%%%%%%%%%%%%%%%%%%%%%

%%%%%%%%%%%%%%%%%%%%%%%%%%
\begin{proof}[Proof of Theorem~\ref{easy}]
Lemma~3 in \cite{lawley2020mortal} gives the following representation for the conditional $m$th moment,
\begin{align}\label{mortallemma}
\begin{split}
\E[\tau^{m}\,|\,\tau<\sigma]
&=\frac{\int_{0}^{\infty} t^{m} \gamma e^{-\gamma t}F(t)\,\dd t}{\int_{0}^{\infty}\gamma e^{-\gamma t}F(t) \,\dd t}
-\frac{\int_{0}^{\infty}mt^{m-1} e^{-\gamma t}F(t)\,\dd t}{\int_{0}^{\infty}\gamma e^{-\gamma t}F(t)\,\dd t},
\end{split}
\end{align}
where $F(t):=\P(\tau\le t)$. First, suppose that $\tmin=0$. Let $\eps\in(0,1)$. By assumption, there exists $\delta>0$ so that
\begin{align*}
(1-\eps)At^{d}
<F(t)
<(1+\eps)At^{d},\quad\text{for all }t\in(0,\delta).
\end{align*}
Therefore, for any $n>-1$, we have that
\begin{align}\label{en}
\begin{split}
(1-\eps)\int_{0}^{\delta}t^{n+d}e^{-\gamma t}\,\dd t
&<\frac{1}{A}\int_{0}^{\delta}t^{n}e^{-\gamma t}F(t)\,\dd t
<(1+\eps)\int_{0}^{\delta}t^{n+d}e^{-\gamma t}\,\dd t.
\end{split}
\end{align}
Now, it is a straightforward to check that
\begin{align*}
\int_{0}^{\delta}t^{n+d}e^{-\gamma t}\,\dd t
\sim\int_{0}^{\infty}t^{n+d}e^{-\gamma t}\,\dd t,\quad\text{as }\gamma\to\infty,
\end{align*}
since $\int_{\delta}^{\infty}t^{n+d}e^{-\gamma t}\,\dd t$ vanishes exponentially as $\gamma\to\infty$. Furthermore, it is a simple calculus exercise to check that
\begin{align*}
\int_{0}^{\infty}t^{n+d}e^{-\gamma t}\,\dd t
=\frac{\Gamma(d+n+1)}{\gamma^{d+n+1}}.
\end{align*}
Since $\int_{\delta}^{\infty}t^{n}e^{-\gamma t}F(t)\,\dd t$ vanishes exponentially as $\gamma\to\infty$ and since $\eps>0$ is arbitrary in \eqref{en}, we thus obtain
\begin{align}\label{this}
\int_{0}^{\infty}t^{n}e^{-\gamma t}F(t)\,\dd t
\sim \frac{A\Gamma(d+n+1)}{\gamma^{d+n+1}},\quad\text{as }\gamma\to\infty.
\end{align}
Combining \eqref{this} with \eqref{mortallemma} and simplifying yields \eqref{zero}.

Next, suppose $\tmin>0$. As above, it is straightforward to check that if $n>-1$, then as $\gamma\to\infty$ we have
\begin{align}\label{yep}
\int_{0}^{\infty}t^{n}e^{-\gamma t}F(t)\,\dd t
\sim \int_{\tmin}^{\infty}t^{n}e^{-\gamma t}A(t-\tmin)^{d}\,\dd t.
%,\quad\text{as }\gamma\to\infty.
\end{align}
Furthermore, changing variables yields
\begin{align}\label{yep2}
\int_{\tmin}^{\infty}e^{-\gamma t}(t-\tmin)^{d}\,\dd t
=e^{-\gamma\tmin}\frac{\Gamma(d+1)}{\gamma^{d+1}}.
\end{align}
In addition, a straightforward application of Watson's lemma gives
\begin{align}\label{yep3}
\begin{split}
&\Big(\int_{\tmin}^{\infty}t^{m}e^{-\gamma t}(t-\tmin)^{d}\,\dd t
-e^{-\gamma\tmin}\frac{\Gamma(d+1)(\tmin)^{m}}{\gamma^{d+1}}\Big)\\
&\sim e^{-\gamma\tmin}\frac{m\Gamma(d+2)(\tmin)^{m-1}}{\gamma^{d+2}},\quad\text{as }\gamma\to\infty.
\end{split}
\end{align}
Similarly, Watson's lemma also gives that as $\gamma\to\infty$,
\begin{align}\label{yep4}
\int_{\tmin}^{\infty}t^{m-1}e^{-\gamma t}(t-\tmin)^{d}\,\dd t
\sim e^{-\gamma\tmin}\frac{\Gamma(d+1)(\tmin)^{m-1}}{\gamma^{d+1}}.
\end{align}
Combining \eqref{yep}-\eqref{yep4} with \eqref{mortallemma} completes the proof.
\end{proof}
%%%%%%%%%%%%%%%%%%%%%%%%%%

%%%%%%%%%%%%%%%%%%%%%%%%%%%%%%%%%%%%%%%%%%%%%%%%%%%%%%%%%%%%%%%%%%%%%%%%%%%%%%%%%%%%%%%%%%%%%%%%%%%%%%%%%%%%%%%%%%%%%%%%%%%%%%%%%%%%%%%%%%%%%%%%%%%%%%%%%%%%%%%%%%%%%%%%%%%%%%%%%%%%%%%%%%%%%%%%%%%%%%%%%%%%%%%%%%%%%%%%%%%%%%%%%%%%%%%%%%%%%%%%%%%%%%%%%%%%%%%%%%%%%%%%%%%%%%%%%%%%%%%%%%%%%%%%%%%%%%%%%%%%%%%%%%%%%%%%%%%%%%%%%%%%%%%%%%%%%%%%%%%%%%%%%%%%%%%%%%%%%%%%%%%%%%%%%%%%%%%%%%%%%%%%%%%%%%%%%%%%%%%%%%%%%%%%%%%%%%%%%%%%%%%%%%%%%%%%%%%%%%%%%%%%%%%%%%%%%%%%%%%%%%%%%%%%%%%%%%%%%%%%%%%%%%%%%%%%%%%%%%%%%%%%%%%%%%%%%%%%%%%%%%%%%%%%%%%%%%%%%%%%%%%%%%%%%%%%%%%%%%%%%%%%%%%%%%%%%%%%%%%%%%%%%%%%%%%%%%%%%%%%%%%%%%%%%%%%%%%%%%%%%%%%%%%%%%%%%%%%%%%%%%%%%%%%%%%%%%%%%%%%%%%%%%%%%%%%%%%%%%%%%%%%%%%%%%%%%%%%%%%%%%%%%%%%%%%%%%%%%%%%%%%%%%%%%%%%%%%%%%%%%%%%%%%%%%%%%%%%%%%%%%%%%%%%%%%%%%%%%%%%%%%%%%%%%%%%%%%%%%%%%%%%%%%%%%%%%%%%%%%%%%%%%%%%%%%%%%%%%%%%%%%%%%%%%%%%%%%%%%%%%%%%%%%%%%%%%%%%%%%%%%%%%%%%%%

% Create the reference section using BibTeX:
\bibliography{library.bib}
\bibliographystyle{unsrt}

\end{document}